\numberwithin{equation}{section} 
\newtheorem{theorem}{Theorem}[section] 
\newtheorem{proposition}[theorem]{Proposition} 
\newtheorem{lemma}[theorem]{Lemma} 
\newtheorem{corollary}[theorem]{Corollary} 
\newtheorem{defn}[theorem]{Definition}
\theoremstyle{definition}
\newtheorem{example}[theorem]{Example} 
\newtheorem{remark}[theorem]{Remark}
\def\rig#1{\smash{ \mathop{\,\longrightarrow\,} 
    \limits^{#1}}}
\def\swar#1{\swarrow
   \rlap{$\vcenter{\hbox{$\scriptstyle#1$}}$}}
\def\sear#1{\searrow
   \rlap{$\vcenter{\hbox{$\scriptstyle#1$}}$}}
\def\dow#1{\Big\downarrow
   \rlap{$\vcenter{\hbox{$\scriptstyle#1$}}$}}
\newcommand{\R}{\mathbb{R}}
\newcommand{\K}{\mathbb{K}}
\newcommand{\Z}{\mathbb{Z}}
\newcommand{\N}{\mathbb{N}}
\newcommand{\PP}{\mathbb{P}}
\newcommand{\OO}{\mathcal{O}}
\title{Matrices with Eigenvectors in a Given Subspace}
\author{Giorgio Ottaviani and Bernd Sturmfels }
\email{ottavian@math.unifi.it\\bernd@math.berkeley.edu}
 \subjclass[2010]{Primary: 15A18;  Secondary: 
 13P25, 14N15,  93B25}
\keywords{Eigenvectors, Kalman's observability condition,  determinantal varieties,
Gr\"obner bases, Hilbert series, vector bundles, Chern classes, resolution of singularities}
\begin{document} 

\begin{abstract}
The Kalman variety of a linear subspace in a vector space
consists of all endomorphism that possess an eigenvector in that subspace.
We study the defining polynomials and basic geometric invariants of the Kalman variety.
\end{abstract}

\maketitle

\section{Introduction}

Let $L$ be a fixed $d$-dimensional linear subspace in the
 vector space $\K^n$, where $\K$ is an algebraically closed field.
 We are interested in the set 
$\mathcal{K}_{d,n}(L)$ of all 
$n {\times} n$-matrices $A = (a_{ij})$ with entries in $\K$ that have
a non-zero eigenvector in $L$. This set is an algebraic variety,
and our aim is to study the defining equations and geometric
properties of $\mathcal{K}_{d,n}(L)$. To this end, we write
$\, L  = \{  x \in \K^n \, : \,C \cdot x = 0 \}$
where $C$ is a fixed $(n-d) \times n$-matrix of rank $n-d$, and we form the 
 {\em Kalman matrix}
 \begin{equation}
\label{eq:kalman}
 \left(
\begin{array}{c} C\\  CA\\  C A^2\\  \vdots\\  C A^d \end{array}\right) .
 \end{equation}
The Kalman matrix (\ref{eq:kalman}) has $n$ columns and 
 $(d+1)(n-d)$ rows. The rows are grouped into $d+1$ blocks 
of $n-d$ rows, with the $i$th block being the
$(n-d) \times n$-matrix $C A^{i-1}$. It is known
that $\mathcal{K}_{d,n}(L)$ 
consists of all matrices $A$ such that the $n$ columns of the Kalman matrix (\ref{eq:kalman})
are linearly dependent. Equivalently, we have

\begin{proposition} \label{thm:kalman}
The set $\,\mathcal{K}_{d,n}(L)$ is an algebraic variety
in the matrix space  $\K^{n \times n}$. It is 
the common zero set of the $n {\times} n$-minors of the Kalman matrix~(\ref{eq:kalman}).
\end{proposition}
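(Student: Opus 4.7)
The plan is to prove the equivalence between $A\in\mathcal{K}_{d,n}(L)$ and the vanishing of all $n\times n$ minors of the Kalman matrix as a double implication, exploiting that a matrix with $n$ columns has all top minors zero iff its columns are linearly dependent iff it has a nonzero right kernel.

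The forward direction is immediate: if $v\in L\setminus\{0\}$ satisfies $Av=\lambda v$, then $A^iv=\lambda^i v$, so $CA^iv=\lambda^i Cv=0$ for every $i\geq 0$, because $v\in L=\ker C$. Hence $v$ annihilates the Kalman matrix.

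For the converse, suppose $v\neq 0$ lies in
$$W_d \;:=\; \bigcap_{i=0}^{d}\ker(CA^i)\;\subseteq\; W_0 \;=\; L.$$
The crux is to show that $W_d$ is $A$-invariant. Once this is established, since $\K$ is algebraically closed, the restriction $A|_{W_d}$ admits an eigenvector, which then lies in $W_d\subseteq L$ and gives the required element of $\mathcal{K}_{d,n}(L)$.

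The $A$-invariance of $W_d$ is the main obstacle, and I would handle it by analyzing the descending chain $W_0\supseteq W_1\supseteq W_2\supseteq\cdots$ with $W_k:=\bigcap_{i=0}^k\ker(CA^i)$. A direct computation gives $AW_{k+1}\subseteq W_k$, since for $v\in W_{k+1}$ one has $CA^j(Av)=CA^{j+1}v=0$ for all $j\leq k$. Therefore, as soon as two consecutive terms coincide, $W_{k+1}=W_k$, one obtains $AW_k\subseteq W_k$, and an induction on $j$ (replacing $v$ by $A^{j-1}v$) yields $W_k=W_{k+j}$ for all $j\geq 0$. Because $\dim W_0=d$ and each strict containment drops dimension by at least one, the chain must stabilize by index $d$, so $W_d=W_{d+1}=\cdots$ and $W_d$ is $A$-invariant. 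The subtle point worth emphasizing is precisely this dimension count: it is why only $d+1$ blocks are needed in (\ref{eq:kalman}), rather than the $n$ blocks one would get from a naive Cayley--Hamilton argument, the improvement coming from the chain living inside $L$ of dimension~$d$ rather than inside all of $\K^n$.
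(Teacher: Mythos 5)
Your proof is correct. Note first that the paper does not prove Proposition~\ref{thm:kalman} directly in the introduction: it cites Shemesh for a self-contained argument and then derives it as the case $s=1$ of Theorem~\ref{sing}, whose proof passes to coordinates $L=\K\{e_1,\ldots,e_d\}$, writes the kernel of the Kalman matrix as the set of $v=(w,0)^{\mathsf T}$ with $A_{21}A_{11}^iw=0$ for $i=0,\ldots,d-1$, and then establishes $A$-invariance of that kernel by invoking Cayley--Hamilton for the $d\times d$ block $A_{11}$ (so that $A_{11}^dw$ is a combination of $w,A_{11}w,\ldots,A_{11}^{d-1}w$). Your route to the same key fact --- $A$-invariance of $W_d=\bigcap_{i=0}^d\ker(CA^i)$ --- is genuinely different: you stay coordinate-free, observe $AW_{k+1}\subseteq W_k$, and use the stabilization of the descending chain $W_0\supseteq W_1\supseteq\cdots$ inside the $d$-dimensional space $L$, concluding that the chain is stationary by index $d$. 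Both arguments exploit $\dim L=d$ as the reason $d+1$ blocks suffice; yours makes that dimension count explicit as a pigeonhole on the chain (the classical observability-index argument from control theory), while the paper's buries it inside Cayley--Hamilton applied to the restricted block. Yours is arguably more transparent and avoids the coordinate choice, at the cost of a slightly longer setup; the paper's is more directly computational and meshes with the reduced Kalman matrix formalism used elsewhere. One small caveat: your parenthetical ``(replacing $v$ by $A^{j-1}v$)'' is an opaque way to phrase the induction $W_k=W_{k+1}\Rightarrow W_{k+1}=W_{k+2}\Rightarrow\cdots$; the clean statement is that for $v\in W_{k+j}$ one has $Av\in W_{k+j-1}=W_{k+j}$ by the inductive hypothesis, hence $CA^{k+j+1}v=CA^{k+j}(Av)=0$ and $v\in W_{k+j+1}$.
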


Proposition \ref{thm:kalman} is a variant of a classical result  in control theory, known as {\em Kalman's Observability
Condition}. A closely related formulation is the {\em Hautus Criterion} \cite{hautus}.
 A self-contained algebraic proof  of Proposition \ref{thm:kalman} appears in \cite[Theorem 2.1]{She}.
We generalize this in Theorem \ref{sing},  which implies
Proposition~\ref{thm:kalman} as its special case $s=1$.
 Since the minors of (\ref{eq:kalman}) are homogeneous polynomials, we may
regard the {\em Kalman variety}
 $\mathcal{K}_{d,n}(L)$ as a subvariety in the projective space $\PP^{n^2-1} = \PP(\K^{n \times n})$.
 The basic invariants of this projective variety are as follows:

\begin{proposition} \label{thm:degree}
The Kalman variety $\mathcal{K}_{d,n}(L)$ is irreducible, and it has codimension $n{-}d$ and degree~$\binom{n}{d-1}$ in $\PP^{n^2-1}$.
\end{proposition}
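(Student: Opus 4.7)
The plan is to realise $\mathcal{K}_{d,n}(L)$ as the image of a projective bundle under a birational projection, and then to compute the degree via Segre classes. Introduce the incidence variety
$$W \,=\, \bigl\{([v],[A]) \in \PP(L)\times\PP(\K^{n\times n}) \,:\, Av \in \K v\bigr\}$$
with projections $\pi_1,\pi_2$. Since $Av\in\K v$ is $n-1$ independent linear conditions on $A$ for each fixed $[v]$, the map $\pi_1$ realises $W$ as a $\PP^{n^2-n}$-bundle over $\PP(L)\cong\PP^{d-1}$. Thus $W$ is irreducible of dimension $n^2-n+d-1$, and $\pi_2(W)=\mathcal{K}_{d,n}(L)$ is irreducible.

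To pin down the codimension I would exhibit a single matrix with a unique eigenvector in $L$: take $A$ diagonal in a basis $v_1,\dots,v_n$ of $\K^n$ with distinct eigenvalues, with $v_1\in L$ and $v_i\notin L$ for $i\geq 2$ (possible whenever $d<n$). Then $\pi_2^{-1}([A])=\{[v_1]\}$ is a reduced point, so by upper semicontinuity of fibre dimension $\pi_2$ is generically injective onto its image. Hence $\pi_2$ is birational and $\dim\mathcal{K}_{d,n}(L)=\dim W=n^2-n+d-1$, giving codimension $n-d$ in $\PP^{n^2-1}$.

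For the degree, write $W=\PP(E)$, where $E\subset\K^{n\times n}\otimes\OO_{\PP(L)}$ is the rank-$(n^2-n+1)$ subbundle with fibre $E_v=\{A:Av\in\K v\}$. Let $F$ be the tautological rank-$(n-1)$ quotient on $\PP(L)$, fitting in $0\to\OO(-1)\to\OO^{n}\to F\to 0$. The evaluation $A\mapsto Av \bmod \K v$ then produces
$$0 \,\longrightarrow\, E \,\longrightarrow\, \K^{n\times n}\otimes\OO \,\longrightarrow\, F(1) \,\longrightarrow\, 0,$$
where $F(1):=F\otimes\OO(1)$. Because $\pi_2^{*}H$ restricts on $W$ to the tautological class $\zeta$ of $\PP(E)$ and $\pi_2$ is birational, the projection formula combined with the pushforward identity $\pi_{1*}(\zeta^{\,\mathrm{rk}(E)-1+k})=s_k(E)$ yields
$$\deg\mathcal{K}_{d,n}(L) \,=\, \int_{\PP^{d-1}} s_{d-1}(E) \,=\, \int_{\PP^{d-1}} c_{d-1}(F(1)),$$
the last equality using $s(E)=c(F(1))$, itself immediate from the displayed sequence.

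The remaining, and main, calculation evaluates this top Chern class. The tautological sequence gives $c_j(F)=h^j$ in $H^{*}(\PP^{d-1})$; applying the splitting principle to the tensor twist produces
$$c_{d-1}(F(1)) \,=\, \Bigl(\sum_{j=0}^{d-1}\binom{n-1-j}{d-1-j}\Bigr)h^{d-1} \,=\, \Bigl(\sum_{i=0}^{d-1}\binom{n-d+i}{i}\Bigr)h^{d-1} \,=\, \binom{n}{d-1}h^{d-1},$$
the last equality being the hockey-stick identity. Integrating over $\PP^{d-1}$ yields $\deg\mathcal{K}_{d,n}(L)=\binom{n}{d-1}$. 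I expect the main obstacle to be the clean identification of $E$ and its tautological resolution on $\PP(L)$, together with the combinatorial collapse of the Chern class sum to the binomial coefficient at the end.
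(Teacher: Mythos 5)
Your argument is correct, and for the degree it takes a genuinely different route from the paper's Section~2 proof. The paper computes the bidegree (equivariant class) of the full incidence variety $\mathcal{I}\subset\PP^{n^2-1}\times\PP^{n-1}$ by starting from the multidegree of the generic determinantal ideal of $2\times 2$ minors of a $2\times n$ matrix and substituting $u\mapsto Av$, then reads off the coefficient of $t_1^{n-d}t_2^{d-1}$. You instead realise the restricted incidence variety $W$ as the projective bundle $\PP(E)$ of a rank-$(n^2-n+1)$ subbundle of the trivial bundle on $\PP(L)$, resolve $E$ against the twisted quotient bundle $F(1)$, and land the degree as $\int_{\PP^{d-1}}c_{d-1}(F(1))$, which the hockey-stick identity collapses to $\binom{n}{d-1}$. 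Since $F(1)$ is exactly $T\PP^{n-1}|_{\PP^{d-1}}$ (Euler sequence), your computation is, up to trivial summands, the $s=1$ case of the Chern-class formula the paper develops later in Section~5 (Theorem~\ref{degkdns}). In short, you have independently rediscovered the paper's vector-bundle viewpoint of Section~5, presented directly over $\PP(L)$ and without the trace-zero reduction; the paper's own proof of Proposition~\ref{thm:degree} uses multidegrees instead, which is more elementary in that it avoids Chern/Segre calculus, while your version generalizes more transparently to the higher singular loci $\mathcal{K}_{s,d,n}$.

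One small imprecision: you invoke \emph{upper semicontinuity of fibre dimension} to conclude that $\pi_2$ is \emph{generically injective}. Semicontinuity of fibre dimension only gives generic \emph{finiteness}; it does not by itself rule out a generic fibre of cardinality $\ge 2$. What you actually want is that the locus of matrices in $\mathcal{K}_{d,n}(L)$ possessing a \emph{unique} eigenline in $L$ is Zariski-open (its complement is where the Kalman matrix has rank $\le n-2$) and nonempty (your diagonal example lands in it), so $\pi_2$ is bijective over a dense open set and hence birational. This is exactly the fact you need so that $\deg\mathcal{K}_{d,n}(L)=\int_W\zeta^{\dim W}$ without a spurious degree factor; it is easy to supply, but the phrase ``upper semicontinuity of fibre dimension'' is not the right citation.
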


The rank condition on the Kalman matrix is well-known in control theory,
as a criterion for controllability of linear systems. One of the earliest 
references to this matrix is Corollary 5.5 in Kalman's article \cite{Kal}.
Remarkably, no algebraist seems to have followed up and conducted
the kind of study we offer here. The one possible exception we found is
the work of Osetinski{\u\i}  {\it et al.} \cite{OVV} who examine maximal minors of the
Kalman matrix from the perspective of invariant theory and combinatorics.

The irreducibility and dimension part of Proposition \ref{thm:degree}
was  proved in the 1983 PhD dissertation of Uwe Helmke, where he
used it to compute the homotopy groups of the space of controllable
pairs $(A,C)$.
The desingularization technique of our Section 4 is close to \cite{HT},
where the approach was to look at the moment map in the setting of symplectic geometry.
A point of view related to ours is also found in \cite{CHPP}.

\smallskip

To express the equations of the Kalman variety in their simplest form, we shall assume, without loss of
generality, that the given subspace $L \simeq \K^d$ is
spanned by the first $d$ standard basis vectors $e_1,\ldots,e_d$,
and we take $C = (\, {\bf 0}_{(n-d) \times d} \,| 
\,{\bf I}_{(n-d) \times (n-d)} \,)$,
a zero matrix prepended to an identity matrix.
Given any  $M \in \K^{n \times n}$ we write 
$[M]$ for the $(n-d) \times d$-matrix
obtained from $M$ by taking the last $n-d$ rows and the first $d$ columns.
With this notation, we define the {\em small Kalman matrix} to be 
\begin{equation}
\label{eq:kalman2}
 \left( \begin{array}{c} \left[A\right] \\  \left[ A^2 \right] \\  \vdots\\  \left[A^d\right] \end{array}\right) 
 \end{equation}
 This small Kalman matrix  (\ref{eq:kalman2})   has format $d(n-d) \times d$.
 The  $d \times d$-minors  of (\ref{eq:kalman2}) and the
 $n \times n$-minors of (\ref{eq:kalman}) generate
 the same ideal $I_{d,n}$ in the polynomial ring
 $\K[A] = \K[a_{11}, a_{12}, \ldots, a_{nn}]$.
 The variety of $I_{d,n}$ is now simply denoted by $\mathcal{K}_{d,n} $, which is
  $\mathcal{K}_{d,n}(L)$ for $L = \K \{ e_1, \ldots, e_d \}$.
  We conjecture that $I_{d,n}$ is a prime ideal.
 
\begin{example} \label{ex:vierzwei}
Let $n=4$, $d=2$, $L = \K\{e_1,e_2\}$. The Kalman variety 
   $\mathcal{K}_{2,4}$ is the subset of $\PP^{15}$  consisting of 
$4 {\times} 4$-matrices $A$ that have an eigenvector
with last two coordinates zero. 
 It has codimension $2$ and degree $4$.
 The small Kalman matrix~is
$$ \begin{pmatrix}
 a_{31} &                         a_{32} \\
 a_{41} &                         a_{42} \\
  a_{11} a_{31}+a_{21} a_{32}+ a_{31}a_{33}+a_{34}a_{41} &
  a_{12} a_{31}+ a_{22} a_{32} + a_{32} a_{33} +a_{34} a_{42} \\
  a_{11} a_{41} + a_{21}a_{42} + a_{31} a_{43} +a_{41} a_{44} & 
  a_{12} a_{41} + a_{22} a_{42}  + a_{32} a_{43} +a_{42} a_{44}
  \end{pmatrix}
  $$
  The six $2 {\times} 2$-minors of this matrix
(one quadric, four cubics and one quartic)
generate the ideal $I_{2,4}$ but not minimally.
To generate $I_{2,4}$, 
the quadric and certain three of the four cubics suffice. We note that
 $I_{2,4}$ is the prime ideal of $\mathcal{K}_{2,4}$.
 \qed
\end{example}

\begin{example} \label{ex:fuenfdrei}
Let $n=5$ and $d=3$.  The Kalman variety $\mathcal{K}_{3,5}$ 
has codimension $2$ and degree $ 10$. It consists of 
$5 {\times} 5$-matrices $A$ that have an eigenvector
with vanishing last three coordinates. The small Kalman matrix 
has size $6 {\times} 3$ and it~equals
\begin{equation}
\label{eq:kalman63}
\begin{pmatrix}
A_{21} \\
A_{21} A_{11} + A_{22} A_{21} \\
A_{21} A_{11}^2 + A_{22} A_{21} A_{11} + A_{21} A_{12} A_{21}  + A_{22}^2 A_{21} 
\end{pmatrix}
\end{equation}
where we use the block decomposition
\begin{equation}
\label{eq:blocks}
 A \,\, = \,\, \begin{pmatrix} A_{11} & A_{12} \\ A_{21} & A_{22} \end{pmatrix}  
\end{equation}
 with $A_{11} $ of size $3 {\times} 3$, $A_{12} $ of size $3 {\times} 2$,
$A_{21}$ of size $2 {\times} 3$, and $A_{22} $ of size $2 {\times} 2$.
The ideal $I_{3,5}$ is prime. It is  minimally generated by
eight of the  twenty $3 {\times} 3$-minors of (\ref{eq:kalman63}), namely,
two quartics, two of the four quintics and four
of the eight sextics.
\qed
\end{example}

We now introduce one further simplification in the representation 
of the Kalman variety $\mathcal{K}_{d,n}$. Since
the condition that $A$ have an eigenvector in 
$L = \K\{e_1, \ldots, e_d\}$ does not depend 
on the entries in the last $n-d$ rows of  $A$, we can 
set $A_{12} = A_{22} = 0$ in the small
Kalman matrix. This simplifies the
$d {\times} d$-minors, but does not change the ideal
$I_{d,n}$ they generate. It is instructive to check this in
Examples  \ref{ex:vierzwei} and \ref{ex:fuenfdrei}.

We define the 
{\em reduced Kalman matrix} to be the $d(n-d) \times d$-matrix
\begin{equation}
\label{eq:kalman3}
\left(
\begin{array}{c}A_{21}\\  A_{21} A_{11} \\ \vdots \\  A_{21} A_{11}^{d-1}\end{array}\right) 
 \end{equation}
 where $A_{11} $ is the upper  left $d {\times} d$-block
 and $A_{21}$ is the lower left $(n-d) {\times} d$-block of $A$.
 Thus the reduced Kalman matrix is the small Kalman matrix with
 $A_{12} = A_{22} = 0$.
 
 In all practical computations with the ideal $I_{d,n}$
 we always use the $d {\times} d$-minors
 of the reduced Kalman matrix as the set of generators.
 For instance, the ideal $I_{3,5}$ in Example \ref{ex:fuenfdrei}
 is generated by the maximal minors of
  the $6 {\times} 3$-matrix $\begin{pmatrix} A_{21} \\ A_{21} A_{11} \\ A_{21} A_{11}^2 \end{pmatrix}$,
 where $A_{11}$ is a $2 {\times} 2$-matrix
 of variables and $A_{21}$ is a $3 {\times} 2$-matrix of variables.
  
  \smallskip
  
  This article is organized as follows. 
  A self-contained proof of
Proposition \ref{thm:degree} is given in Section 2.
 In Section 3 we show that the generators of $I_{d,n}$ form a Gr\"obner basis 
 when $d=2$.  This seems to be no longer the case for $d \geq 3$. 
 Section 4 concerns the singular
 locus of the Kalman variety $\mathcal{K}_{d,n}$, and this leads us to 
  varieties defined by non-maximal minors of (\ref{eq:kalman3}).
  In Section 5 we introduce
 vector bundle techniques for studying $\mathcal{K}_{d,n}$, 
 and we compute the degrees of the singular locus.
 
 \smallskip 
 
 This project began in June 2010, after a visit by Bernd Sturmfels to
the Basque Center of Applied Mathematics in Bilbao, where Enrique Zuazua
told him about the Kalman observability condition and its role for partial differential
equations. Beauchard and Zuazua  \cite{BZ} had shown that
the Kalman variety arises as the failure locus of the 
Shizuta--Kawashima condition for partially dissipative linear hyperbolic systems.
One week after Bilbao, Bernd visited Florence where he discussed
this topic with Giorgio Ottaviani and this led to the present paper.
We are grateful to Enrique for getting us started and for participating
in our early e-mail discussions.  
We also thank Uwe Helmke and Raman Sanyal for helping us with references.

Our result on
the Kalman variety
have the potential of being useful for a wide range of applications
other than classical control theory and PDE. One example is the
work of Tran \cite{Tran} on statistical ranking. 
In her analysis of Saaty's analytical hierarchy process \cite{Saa1, Saa2}, Tran examines
the following question: Given a positive real $n {\times} n$-matrix $A$, under
which condition  does the dominant (Perron-Frobenius)
eigenvector of $A$ fail to have distinct entries. The algebraic
condition for the $i$th and $j$th coordinate of that eigenvector
to be equal is precisely the Kalman variety of the hyperplane
$\,L \,=\, \{ x \in \R^n: x_i = x_j\}$. More generally, if $L$ is
any linear space defined by  equations of the form $x_i = x_j$,
then the Kalman variety of $L$ arises in the 
stratification of matrix space $\R^{n \times n}$ that represents
distinct outcomes in \cite{Saa2}.

\section{Dimension and degree}

In this section we present a proof of
Proposition \ref{thm:degree}. We begin with an example.

\begin{example} \label{ex:d=1}
Consider the case $d=1$ 
when $L = {\rm ker}(C)$ is spanned by a single vector $v \in 
\K^n \backslash \{0\}$. Here, the Kalman variety
$\mathcal{K}_{1,n}(L)$ is a linear space of codimension $n-1$.
Its ideal $I_{1,n}(L)$ is generated by the
$2 {\times} 2$-minors of the $n {\times} 2$-matrix
$\,\bigl(\, v \,, \, Av\, \bigr)$.
That same ideal of linear forms in $\K[A]$ is generated, highly redundantly,
by the  $\binom{2n -2}{n}$  maximal minors 
of the Kalman matrix 
$\begin{pmatrix} C \\ CA \end{pmatrix} $,
which has format  $(2n-2) \times n$. Clearly, it suffices
to take only those $n-1$ maximal minors
that involve the first $n-1$ rows and are hence linear in $A$.
If we take $v = e_1$  then both descriptions yield the ideal
$\,I_{1,n} = \langle a_{21}, a_{31}, \ldots, a_{n1} \rangle\,$
whose generators are the entries of the reduced Kalman matrix $[A]$
of format $ (n-1) \times 1$. \qed
\end{example}

\begin{proof}[Proof of Proposition~\ref{thm:degree}]
We regard  vectors $v \in \K^n\backslash \{0\}$ as
points in the projective space $\PP^{n-1}$, and  we regard
non-zero $n {\times} n$-matrices
as points in $\PP^{n^2-1}$.
The product of these two projective spaces, $\,X= \PP^{n^2-1} \times \PP^{n-1}$,
has the two projections
\begin{equation}
\label{eq:twoprojections}
\begin{array}{ccccc}&&X\\
&\swar{p}&&\sear{q}\\
\PP^{n^2-1}&&&&\PP^{n-1}
\end{array}
\end{equation}
We fix the incidence variety
$\, \mathcal{I}\,:=\,\{(A,v) \in \PP^{n^2-1} \times \PP^{n-1}\,|\,
v\textrm{\ is a eigenvector of\ }A\}$.
The projection of $\mathcal{I}$ to the second factor
$$\begin{array}{ccc} \mathcal{I} &\rig{q}&\PP^{n-1}\\
(A,v) & \mapsto & v
\end{array}$$
is surjective and every fiber is a linear subspace of codimension $n-1$ in
$\PP^{n^2-1}$. 
To be precise, the fiber over $v$ is the linear space
$\mathcal{K}_{1,n}(\K v)$ seen in Example \ref{ex:d=1}.

The projection of the incidence variety $\mathcal{I}$ to the first factor
$$\begin{array}{ccc} \mathcal{I} &\rig{p}&\PP^{n^2-1}\\
(A,v)&\mapsto&A
\end{array}$$
is surjective and the general fiber is finite. It consists of the $n$ eigenvectors of $A$.
These properties imply that
 $\mathcal{I}$ is irreducible and has codimension $n-1$ in~$X$.
 
The Kalman variety has the following description
in terms of the diagram (\ref{eq:twoprojections}):
\begin{equation}
\label{projkalman}
\mathcal{K}_{d,n}(L)
\,\,= \,\, p_*\left( \mathcal{I} \cap q^*(\PP{{}}(L))\right)
\end{equation}
The $(d{-}1)$-dimensional subspace $\PP(L)$ of $\PP^{n-1}$
specifies the following diagram:
\begin{equation}
\label{eq:twomoreprojections}
\begin{array}{ccccc}&& \mathcal{I} \cap q^*(\PP{{}}(L)) \\
&\swar{p}&&\sear{q}\\
\mathcal{K}_{d,n}(L)&&&&\PP{{}}(L)
\end{array}
\end{equation}
As before, each fiber of
the map $q$ in (\ref{eq:twomoreprojections}) is a linear space
of codimension $n-1$ in $\PP^{n^2-1}$. This implies that
$\, \mathcal{I} \cap q^*(\PP{{}}(L))\,$ is irreducible and
its dimension equals
$$   {\rm dim}(\PP(L)) + {\rm dim} (\PP^{n^2-1}) - (n-1)  \,\,= \,\,
(d-1) + n^2-1 - (n-1)  \,\,=\,\, n^2-1 -(n-d). $$
Since the general fiber of the surjection $p$ is finite, 
the Kalman variety $\mathcal{K}_{d,n}(L)$ is irreducible
of the same dimension. Hence $\mathcal{K}_{d,n}(L)$
has codimension $n{-}d$ in~$\PP^{n^2-1}$.

We next derive the degree of $\mathcal{K}_{d,n}(L)$.
Consider the prime ideal generated by the $2 {\times} 2$-minors
of a $2 {\times} n$-matrix $\,\bigl(\, u \,, \, v \, \bigr)$.
This ideal lives in a $\N^2$-graded polynomial ring
$\K[u,v]$ in $2n$ unknowns.
Its {\em multidegree}, in the sense of \cite[\S 8.5]{MS}, is equal to
\begin{equation}
\label{eq:bidegree1}
 \mathcal{C}(t_1,t_2) \,\, = \,\,\,
t_1^{n-1} + 
t_1^{n-2} t_2 + 
t_1^{n-3} t_2^2 +  \cdots +
t_1 t_2^{n-2}  + 
t_2^{n-1} .
\end{equation}
This formula is the  case $k=2$ and $\ell = n$ of 
Exercise 15.5(b) in \cite[page 308]{MS}.

The prime ideal of the incidence variety
$\mathcal{I} \subset \PP^{n^2-1} \times \PP^{n-1}$ lives in 
the $\N^2$-graded polynomial ring $\K[A,v]$ in $n^2+n$ unknowns,
namely the entries of $A$ and of $v$. This prime ideal
is generated by the $2 {\times} 2$-minors of the matrix 
$\,\bigl(\, A v \,, \, v\, \bigr)$, so it is obtained from
the ideal in the previous paragraph by
the bilinear substitution $\,u \mapsto A v $.
This implies that the multidegree of $\mathcal{I}$ is obtained
from  (\ref{eq:bidegree1}) by the substitution
$t_1 \mapsto t_1 + t_2$.
 Hence the multidegree of $\mathcal{I}$ is 
the bivariate polynomial
\begin{equation}
\label{eq:bidegree2}
\mathcal{C}(t_1+t_2,t_2) \,\, = \,\,\,
\sum_{d=1}^n (t_1+t_2)^{n-d} t_2^{d-1}
\,\,\, = \,\,\,
\sum_{d=1}^n \binom{n}{d-1} t_1^{n-d} t_2^{d-1}
\end{equation}
In geometric language, the bidegree (\ref{eq:bidegree2}) is the
{\em equivariant cohomology class} of the incidence variety $\mathcal{I}$
with respect to the natural $(\K^*)^2$-action on
 $\PP^{n^2-1} \times \PP^{n-1}$.
 
The classical degree of a projective variety 
is the number of intersection points with a general
linear subspace of complementary dimension.
Likewise, each coefficient of the bidegree 
(\ref{eq:bidegree2}) is the number of points in the
intersection of $\mathcal{I}$ with an
$(n-1)$-dimensional variety $\PP(K) \times \PP(L)$
where $\PP(K)$ and $\PP(L)$ are general linear subspaces
of $\PP^{n^2-1}$ and $\PP^{n-1}$. Specifically,
if we fix $\PP(L)$ of dimension $d-1$ then this
number equals $\binom{n}{d-1}$. Equivalently, the number
of points in the intersection of $\mathcal{K}_{d,n}(L)$
with a general $(n-d)$-dimensional linear subspace 
$\PP(K)$ of $\PP^{n^2-1}$ is equal to  $\binom{n}{d-1}$.
Therefore, $\, {\rm degree}(\mathcal{K}_{d,n}(L)) = \binom{n}{d-1}$,
and the proof is complete.
\end{proof}

\begin{example}
Consider the special case $d = n-1$ when $L$ is the hyperplane in $\K^n$
perpendicular to a non-zero row vector $C$.
Then (\ref{eq:kalman}) is the 
$n {\times} n$-matrix whose $i$-th row vector is $C A^{i-1}$. The Kalman variety 
$\mathcal{K}_{n-1,n}$ is the hypersurface in $\PP^{n^2-1}$ of degree $\binom{n}{2}$ defined 
by the determinant of that matrix. 
Likewise, the reduced Kalman matrix (\ref{eq:kalman3}) is the $(n{-}1) \times (n{-}1)$-matrix
whose $i$-th row vector equals $A_{21} A_{11}^{i-1}$. Its
determinant is that same generator of the
principal ideal $I_{n-1,n}$.
\qed
\end{example}

\section{Combinatorics for $d=2$}

In this section we fix $d=2$. 
We present a study of the ideal $I_{2,n}$
from the perspective of combinatorial commutative algebra \cite{MS}.
The reduced Kalman matrix~is 
\begin{equation}
\label{eq:kalman4}
\,\begin{pmatrix} A_{21} \\ A_{21} A_{11}  \end{pmatrix}
\end{equation}
where $A_{11}$ is the upper left $2 {\times} 2$-block 
and $A_{21}$ is the lower left $(n-2) \times 2$-block of
the $n {\times} n$-matrix $A = (a_{ij}) $. The Kalman variety 
$\mathcal{K}_{2,n}$ has codimension $n-2$, and the
Kalman ideal $I_{2,n}$ is generated by the  $2 {\times} 2$-minors of 
the $(2n-4) \times 2$-matrix~(\ref{eq:kalman4}).

This construction has the following
geometric interpretation.
Consider the Segre variety $\PP^1 \times \PP^{n-1} \subset \PP^{2n-1}$
that is cut out by the $2 {\times} 2$-minors of the $n \times 2$-matrix
$\, \begin{pmatrix} A_{11} \\ A_{21} \end{pmatrix}$.
Take the cone over that Segre variety from the  point
$\begin{pmatrix} {\bf I}_{2 \times 2} \\ {\bf 0}_{n-2 \times 2} \end{pmatrix}$.
That cone is a subvariety of codimension $n-2$
and degree $n$ in $\PP^{2n-1}$ defined by the $2 {\times} 2$-minors of (\ref{eq:kalman4}).
We conclude the following geometric description of our variety:

\begin{remark} \label{itsacone}
The Kalman variety $\mathcal{K}_{2,n} \subset \PP^{n^2-1}$
is the cone with base $\PP^{n^2-2n}$ over a
general linear projection of the Segre variety 
$\PP^1 \times \PP^{n-1}$ from $\PP^{2n-1}$ into $\PP^{2n-2}$.
\end{remark}

Our goal in this section is to prove the following result about $\mathcal{K}_{2,n}$
and its ideal.

\begin{theorem} \label{thm:gb}
The ideal $I_{2,n}$ is prime. Its minimal generators are the $\binom{n-2}{2}$~quadrics
\begin{equation}
\label{eq:gbquadrics}
 \underline{a_{i1} a_{j2} } - a_{i2} a_{j1}=\begin{vmatrix}
a_{i1}&a_{i2}\\
a_{j1}&a_{j2}\end{vmatrix} \qquad 
\text{where} \,\,\, \, 3 \leq i < j \leq n 
\end{equation}
and the $\binom{n-1}{2}$ cubics
\begin{equation}
\label{eq:gbcubics}
\!\! \begin{matrix}  &
 \underline{a_{11} a_{i2} a_{j1}} - a_{12} a_{i1} a_{j1} + a_{21} a_{i2} a_{j2} - a_{22} a_{i1} a_{j2} 
 \quad \quad \\
& \qquad = \quad -\,\begin{vmatrix}
a_{i1}&a_{i2}\\
a_{j1}a_{11}+a_{j2}a_{21}&a_{j1}a_{12}+a_{j2}a_{22}\end{vmatrix}
\end{matrix} \quad
\text{where $3 \leq i \leq j \leq n$.}
 \end{equation}
These form a Gr\"obner basis of $I_{2,n}$ with respect
to the lexicographic term order.
The Hilbert series of the quotient ring $\K[A]/I_{2,n}$ with the standard $\N$-grading equals
\begin{equation}
\label{eq:hs1}
\frac{n}{(1-z)^{n^2-n+2} } \,-\,
\frac{n-1}{(1-z)^{n^2-n+1}}
 \,- \,\frac{1}{(1-z)^{n^2-2n+4}}
  \,+\,\frac{1}{(1-z)^{n^2-2n+3}} 
\end{equation}
\end{theorem}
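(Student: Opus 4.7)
The plan is to pivot off a single lexicographic Gr\"obner basis; once it is established, primality, minimality, and the Hilbert series all follow from standard properties of squarefree initial ideals.

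First I would fix a lexicographic order in which $a_{11}$ is largest, followed by $a_{31},a_{41},\ldots,a_{n1}$, then $a_{32},\ldots,a_{n2}$, and then all remaining variables. With this order the underlined monomial of each polynomial in (\ref{eq:gbquadrics}) and (\ref{eq:gbcubics}) is indeed its leading term, and the candidate initial ideal is the squarefree monomial ideal
\[
J \;=\; \bigl\langle a_{i1}a_{j2} : 3\le i<j\le n \bigr\rangle \,+\, \bigl\langle a_{11}a_{i2}a_{j1} : 3\le i\le j\le n \bigr\rangle.
\]
I would verify Buchberger's criterion for the listed generators. The S-pairs split into three families (quadric--quadric, quadric--cubic, cubic--cubic); most have coprime leading monomials and reduce automatically. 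The remaining S-polynomials reduce to zero via the quadrics, using the underlying $2\times 2$ minor structure that makes the cubics symmetric in $i$ and $j$ modulo (\ref{eq:gbquadrics}).

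Second, I would compute the Hilbert series of $\K[A]/J$, which equals that of $\K[A]/I_{2,n}$ by flat degeneration. Only $2n-3$ of the $n^2$ variables appear in $J$, so one factor of $(1-z)^{-(n^2-2n+3)}$ comes for free. The Stanley--Reisner complex of $J$ on the remaining $2n-3$ vertices is pure of dimension $n-2$, with exactly $n$ facets: the ``cut'' facets $\{a_{i1}:i\ge k\}\cup\{a_{i2}:3\le i\le k\}$ for $3\le k\le n$, together with the two ``pure-column'' facets $\{a_{11}\}\cup\{a_{i2}:3\le i\le n\}$ and $\{a_{11}\}\cup\{a_{i1}:3\le i\le n\}$. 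A direct inclusion--exclusion over these facets reproduces the four-term expression (\ref{eq:hs1}); the leading term $n/(1-z)^{n^2-n+2}$ reflects codimension $n-2$ and degree $n=\binom{n}{d-1}$, confirming Proposition~\ref{thm:degree} for $d=2$.

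Third, I would derive primality and minimality. Because $J$ is squarefree it equals its own radical, and the standard containment $\mathrm{in}_\prec(\sqrt{I_{2,n}})\subseteq\sqrt{\mathrm{in}_\prec(I_{2,n})}=J$ then forces $I_{2,n}$ to be radical. Since $\mathcal{K}_{2,n}$ is irreducible by Proposition~\ref{thm:degree} (equivalently, by the cone description in Remark~\ref{itsacone}), a radical ideal defining an irreducible variety is prime, so $I_{2,n}$ is prime. Minimality is a degree count: the quadrics (\ref{eq:gbquadrics}) exhaust the degree-$2$ part of $I_{2,n}$, and each cubic in (\ref{eq:gbcubics}) has leading monomial $a_{11}a_{i2}a_{j1}$ not divisible by any quadric leading monomial $a_{i'1}a_{j'2}$ (the constraints $i'<j'$ and $i\le j$ are incompatible), so no cubic is a consequence of the quadrics.

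The main obstacle will be the Gr\"obner basis verification, specifically the cubic--cubic S-pairs. Although each individual reduction is mechanical, the overlap patterns indexed by shared values among $i,i',j,j'$ proliferate, and some reductions cascade through several quadrics before terminating. A uniform parameterization of the overlap types, together with the $i\leftrightarrow j$ symmetry modulo the quadrics, should keep the case analysis tractable.
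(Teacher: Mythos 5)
Your proposal diverges from the paper precisely at the step you yourself flag as the ``main obstacle,'' and that divergence is a genuine gap. You propose to establish that the quadrics and cubics form a Gr\"obner basis by running Buchberger's criterion directly. You never carry this out, and it is not a routine verification: the quadric--quadric S-pairs with a shared variable, the quadric--cubic S-pairs with $i=j'$ or $j=i'$, and especially the cubic--cubic S-pairs with overlapping index sets all require explicit reduction chains, and the $i\leftrightarrow j$ symmetry modulo the quadrics alone does not dispatch them. As written, the central claim of the theorem --- that ${\rm in}_{\rm lex}(I_{2,n})$ equals the monomial ideal $J$ --- rests on a computation that has not been done.

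The paper avoids Buchberger entirely through a degree/codimension argument that is the key idea you are missing. One has the trivial containment $J\subseteq{\rm in}_{\rm lex}(I_{2,n})$, since the underlined monomials are leading terms of elements of $I_{2,n}$. The facet description of the Stanley--Reisner complex (which you write down correctly) shows $J$ is a squarefree \emph{unmixed} ideal of codimension $n-2$ and degree $n$. Independently, Proposition~\ref{thm:degree} gives that $I_{2,n}$, hence its initial ideal, also has codimension $n-2$ and degree $n$. For an unmixed radical ideal contained in an ideal of the same codimension and degree, the containment must be an equality (otherwise the larger ideal would cut out a proper closed subset, forcing lower degree or higher codimension). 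This single observation replaces all S-pair computations. Notice that this makes the logical dependence opposite to yours: the paper \emph{inputs} the degree and codimension from Proposition~\ref{thm:degree} to obtain the Gr\"obner basis, whereas you propose to output the degree and codimension from the Gr\"obner basis. Your route is not wrong in principle, but it is the harder direction and you have not completed it.

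Two smaller points. First, the paper derives the Hilbert series via a shelling of $\Delta'$ (each facet attaches along a codimension-one face and contributes exactly one new vertex), which produces the four-term formula (\ref{eq:hs1}) cleanly; a raw inclusion--exclusion over the $n$ facets will work but involves many cancellations and is harder to organize. Second, your argument that a squarefree initial ideal forces $I_{2,n}$ to be radical, combined with irreducibility of $\mathcal{K}_{2,n}$ from Proposition~\ref{thm:degree}, correctly yields primality; this part matches the paper. If you want to salvage your approach without doing the S-pair bookkeeping, replace your first step with the unmixed-radical-plus-degree argument above, and the rest of your outline goes through.
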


\begin{proof}
The listed quadrics and cubics are among the $2 {\times} 2$-minors 
of the reduced Kalman matrix, so they lie in the ideal $I_{2,n}$. Their
lexicographic leading terms are the underlined monomials.
Let $M$ denote the radical ideal generated by these monomials. 
Then we have $\, M \subseteq {\rm in}_{\rm lex} (I_{2,n})$.
Our goal is to show that equality holds.

The radical monomial ideal $M$ is the intersection of $n$ 
monomial primes, each having codimension  $n-2$.
The corresponding simplicial complex $\Delta$ has the facets
\begin{equation}
\label{eq:facets}
 \begin{matrix} & \{ a_{11}, a_{31}, a_{41}, \ldots, a_{n1} \} \,\qquad \qquad & \\
& \,\,\, \quad \{ a_{32}, a_{42} ,\ldots, a_{i2}, a_{i1} ,a_{i+1,1} ,\ldots a_{n1} \} &
\hbox{for} \,\,\, i=3,4,\ldots,n, \\
\hbox{and} & \{a_{11} , a_{32}, a_{42}, \ldots, a_{n2} \} \qquad \qquad
\end{matrix}
\end{equation}
Here we only list variables that appear in the generators of $M$,
not the $n^2 - 2n + 3$ variables $a_{ij}$ that do not appear. 
The simplicial complex that represents the variety of $M$ in $\PP^{n^2-1}$
is obtained from  (\ref{eq:facets}) by taking the join 
with a simplex of dimension $n^2-2n+2$.
From the list in (\ref{eq:facets}) we see that $M$ is unmixed 
ideal of codimension $n-2$ and degree~$n$.
Recall that being {\em unmixed} means that
$M$ is the intersection of prime ideals
each generated by a set of variables $a_{ij}$ of the same cardinality.

Since $I_{2,n}$ has codimension $n-2$ and degree $n$,
by Proposition \ref{thm:degree},
the initial monomial ideal ${\rm in}_{\rm lex}(I_{2,n})$ also has codimension $n-2$ and degree $n$.
But this implies 
$\,M = {\rm in}_{\rm lex} (I_{2,n})$. Indeed, if the unmixed radical ideal
$M$ were strictly contained in ${\rm in}_{\rm lex}(I_{2,n})$
then the latter would have lower degree or higher codimension.

This proves that the quadrics in (\ref{eq:gbquadrics})
and cubics in (\ref{eq:gbcubics}) form a Gr\"obner basis for $I_{2,n}$,
and in particular they generate $I_{2,n}$. This generating set is  minimal because the generators
are $\K$-linearly independent and
 none of the cubics can be written as
a  $\K[A$]-linear combination of the quadrics.
We have shown that ${\rm in}_{\rm lex}(I_{2,n})$
is a radical ideal, and this implies that $I_{2,n}$ is radical.
Since the variety $\mathcal{K}_{2,n}$ defined by $I_{2,n}$ is irreducible,
we can conclude that the ideal $I_{2,n}$ is prime.

Let $\Delta'$ be the simplicial complex  of dimension $n-2$
obtained from $ \Delta$ by replacing the occurrence of
the vertex $a_{11}$ in the last facet of (\ref{eq:facets})
with a different vertex $a_{11}'$. Then the given ordering
of the $n$ facets represents a {\em shelling} of $\Delta'$. In that shelling order,
precisely one new vertex enters whenever a new facet gets attached.
This implies that the Hilbert series of the Stanley-Reisner ring
$\K[\Delta'] $ is equal to
$\, \bigl(1 + (n - 1) z \bigr)/(1-z)^{n-1}$.
When we pass from $\Delta'$ to $\Delta$
then we identify $(a_{11}')^i$ with $a_{11}^i$ for each $i \leq 1$,
and all other standard monomials remain unchanged.
Hence we lose one monomial per positive degree,
so we need to subtract $z/(1-z)$ from the previous Hilbert series.
This implies that  $\K[\Delta] = \K[A]/M$, regarded as a quotient
of the polynomial ring in $n^2$ variables, has the Hilbert series
\begin{equation}
\label{eq:hs2}
\frac{1}{(1-z)^{n^2-2n+3}} \biggl( \frac{1 + (n - 1) z }{(1-z)^{n-1}} - \frac{z}{1-z} \biggr) 
\end{equation}
This is also the Hilbert series of $\K[A]/I_{2,n}$. It equals
 the rational function (\ref{eq:hs1}).
\end{proof}

From the Hilbert function formula (\ref{eq:hs1}) 
we can read off the Hilbert polynomial:

\begin{corollary}
The Hilbert polynomial of the Kalman variety $\mathcal{K}_{2,n}$
in $\PP^{n^2-1}$ equals
\begin{equation}
\label{eq:easyeqn}
 n \bigl[\PP^{n^2-n+1}\bigr] \, - \,(n- 1) \bigl[\PP^{n^2-n} \bigr]\ \,-\, 
\bigl[\PP^{n^2-2n+3}\bigr] \,+\,\bigl[\PP^{n^2-2n+2}\bigr], 
\end{equation}
where $\bigl[\PP^r\bigr] =\binom{t+r}{r}$ is the Hilbert polynomial of
$r$-dimensional projective space $\PP^r$.
\end{corollary}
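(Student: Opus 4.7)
The plan is to read the Hilbert polynomial straight off the rational function in (\ref{eq:hs1}), via the elementary expansion
$$ \frac{1}{(1-z)^{r+1}} \,=\, \sum_{t \geq 0} \binom{t+r}{r} z^t. $$
Since $\binom{t+r}{r}$ is precisely $[\PP^r]$, every summand of the form $\frac{c}{(1-z)^{r+1}}$ in a Hilbert series contributes $c \cdot [\PP^r]$ to the Hilbert polynomial, provided $r+1 > 0$ so that the polynomial formula for the coefficients is valid for every $t \geq 0$. All four denominators in (\ref{eq:hs1}) satisfy this: the smallest exponent, $n^2-2n+3 = (n-1)^2+2$, is at least $3$. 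Hence no correction at low degrees is required and the Hilbert function of $\K[A]/I_{2,n}$ literally equals the Hilbert polynomial on all of $\N$.

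Once this dictionary is set up, the only remaining step is term-by-term matching. Writing each denominator as $(1-z)^{r+1}$, I read off $r = n^2{-}n{+}1$, $r = n^2{-}n$, $r = n^2{-}2n{+}3$, and $r = n^2{-}2n{+}2$ with respective scalar coefficients $n$, $-(n{-}1)$, $-1$, $+1$. Summing the four contributions reproduces the claimed expression (\ref{eq:easyeqn}) verbatim. No auxiliary lemma or algebraic manipulation is needed beyond this identification.

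There is essentially no obstacle: the corollary is a bookkeeping translation of the Hilbert series in Theorem~\ref{thm:gb}. The one point that warrants a sentence of justification is confirming that the Hilbert function agrees with the Hilbert polynomial in every degree, so that the alternating sum of $[\PP^r]$ classes is the Hilbert polynomial on the nose rather than only in sufficiently large degrees; but, as noted above, this is immediate from the positivity of all four exponents appearing in (\ref{eq:hs1}).
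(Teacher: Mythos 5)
Your proof is correct and is precisely the argument the paper leaves implicit when it says the Hilbert polynomial can be ``read off'' from the Hilbert series~(\ref{eq:hs1}): expand each $\frac{1}{(1-z)^{r+1}}$ as $\sum_{t\ge 0}\binom{t+r}{r}z^t$ and match the four summands to the four $[\PP^r]$ terms in~(\ref{eq:easyeqn}). Your added remark that the Hilbert function coincides with the Hilbert polynomial in every degree (because all four exponents $r+1$ are positive) is a nice extra observation, though not strictly needed to identify the Hilbert polynomial.
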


\begin{remark}
The Gr\"obner basis 
of quadrics
and cubics in Theorem \ref{thm:gb}
is not reduced since
the last term  $ a_{22} a_{i1} a_{j2} $
 of (\ref{eq:gbcubics}) for $i < j$
is divisible by the leading term of (\ref{eq:gbquadrics}).
Replacing that last term by 
$ a_{22} a_{i2} a_{j1} $, we get 
the reduced Gr\"obner~basis.
\end{remark}

Theorem~\ref{thm:gb} does not extend to $d \geq 3$.
In particular,
the lexicographic initial ideal ${\rm in}_{\rm lex}(I_{3,5})$ is not
radical. At present we do not even know whether
$I_{3,n}$ is prime and we do not have a general
formula for its Hilbert polynomial. 

\begin{example}\label{n9d3}
The Kalman variety  for  $n = 9$ and $d = 3$  has
codimension $6$ and degree $36$ in $\PP^{26}$.
Its {\em K-polynomial} (i.e.~the numerator of its Hilbert series) equals
$$ \begin{matrix}
(1-z)^6 \cdot (1+6z+21z^2+36z^3-19z^4-109z^5+151z^6+152z^7-637z^8\\
 \qquad \quad \,\, +\,896z^9-792z^{10}   +495z^{11}-220z^{12}+66z^{13}-12z^{14}+z^{15}),
  \end{matrix}
$$
and its Hilbert polynomial is found to be
\begin{small}
$$
 36[\PP^{20}]-63[\PP^{19}]+28[\PP^{18}]-42[\PP^{14}]+111[\PP^{13}]-97[\PP^{12}]+28[\PP^{11}]
 +[\PP^8]-3[\PP^7]+3[\PP^6]-[\PP^5].
$$
 \end{small}
 In particular,  for $d \geq 3$  there is no formula as short and simple as (\ref{eq:easyeqn}). \qed
\end{example}

The following result  about the generators of $I_{3,n}$
appeared as a conjecture in the first version of this paper.
A proof was announced by Steven Sam in May 2011.

\begin{theorem}\label{3generators}{\rm (Sam \cite{Sam})}
The ideal $I_{3,n}$ is minimally generated by polynomials of
degree $3, 4, 5, 6$. There are
$\binom{n-3}{3}$ generators in degree $3$, 
there are $2 \binom{n-2}{3}$ generators each in degree $4$
and in degree $5$, and there are 
$\binom{n-1}{3}$ generators in degree~$6$.
\end{theorem}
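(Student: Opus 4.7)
The plan is to exploit the natural $GL(V) \times GL(W)$-action on the ambient polynomial ring, where $V = \K^3$ and $W = \K^{n-3}$. Since $A_{11} \in V^* \otimes V$ and $A_{21} \in V^* \otimes W$, the reduced Kalman matrix (\ref{eq:kalman3}) is equivariant, so $I_{3,n}$ is a stable ideal and the module of minimal generators in each degree decomposes into irreducible Schur modules $S_\lambda V \otimes S_\mu W$. Because $\dim V = 3$ is fixed, the $n$-dependence of the counts in the theorem must come entirely from Schur dimensions of $W$. The identifications $\binom{n-3}{3} = \dim \wedge^3 W$, $\,2\binom{n-2}{3} = \dim S_{(2,1)} W$, and $\binom{n-1}{3} = \dim S_{(3)} W$ immediately pin down the candidate Schur content in degrees $3, 4, 5, 6$.

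First, I would classify the $3 \times 3$ minors of (\ref{eq:kalman3}) by their block-type $(k_1, k_2, k_3)$ with $0 \le k_1 \le k_2 \le k_3 \le 2$, where $k_\alpha$ records which block $A_{21} A_{11}^{k_\alpha}$ contributes the $\alpha$-th row; such a minor has degree $3 + k_1 + k_2 + k_3$. Pure-type minors $(k,k,k)$ factor by Cauchy--Binet as $\det(A_{11})^k$ times a $3 \times 3$ minor of $A_{21}$, hence already lie in the ideal generated by the degree-$3$ minors and contribute no new generators. Mixed types in degrees $4, 5, 6$ would be expanded via Pieri's rule into irreducible $GL(V) \times GL(W)$-components, isolating the Schur pieces that match the predicted counts. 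Minimality of these pieces would follow from a character argument, showing that the claimed Schur components do not already occur in the degree-$d$ part of the ideal generated by strictly lower-degree generators.

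The main obstacle, and the direction requiring the bulk of the work, is proving that no further minimal generators appear in degrees $\ge 7$. For this I would apply the Kempf--Lascoux--Weyman geometric technique to the desingularization $\pi \colon Z \to \mathcal{K}_{3,n}$ introduced in Section 4. Realizing $Z$ as a linear bundle over $\PP(L) = \PP^2$ whose fiber at $[v]$ parametrizes matrices admitting $v$ as an eigenvector, the push-forward of the Koszul complex on the ambient total space produces a $GL(V) \times GL(W)$-equivariant complex $F_\bullet$ resolving $\OO_{\mathcal{K}_{3,n}}$. Each term $F_i$ decomposes, via Bott's theorem for homogeneous bundles on $\PP^2$, into an explicit direct sum of Schur functors of $V$ and $W$ in specific graded degrees. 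The combinatorial crux is then to verify, by this Bott bookkeeping, (i) that $F_1$ has no summand in any degree $\ge 7$, and (ii) that in degrees $3, 4, 5, 6$ exactly the predicted Schur components appear with multiplicity one; matching dimensions then yields the stated counts $\binom{n-3}{3}$, $2\binom{n-2}{3}$, $2\binom{n-2}{3}$, $\binom{n-1}{3}$. As a consistency check, I would verify the resulting Hilbert series against the $K$-polynomial computed in Example \ref{n9d3}.
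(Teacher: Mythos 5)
The paper itself does not prove Theorem~\ref{3generators}: it is stated with attribution to Sam~\cite{Sam}, and Remark~5.3 records that Sam's proof ``uses Theorem~\ref{desing2} and the Weyman method.'' Your proposal is exactly that strategy --- $GL(V)\times GL(W)$-equivariance, the projective-bundle desingularization $\PP(M^{n}_{|\PP^2})$ over $\PP(L)=\PP^2$ from Section~5, push-forward of the Koszul complex \`a la Kempf--Lascoux--Weyman, and Bott's theorem on $\PP^2$ to identify the Schur content of each graded piece --- so it aligns with the approach the paper credits to Sam rather than introducing a different route. Your dimension identifications $\binom{n-3}{3}=\dim\wedge^3 W$, $2\binom{n-2}{3}=\dim S_{(2,1)}W$, $\binom{n-1}{3}=\dim S_{(3)}W$ for $W=\K^{n-3}$ are correct, and the observation that pure-type minors factor as $\det(A_{11})^k$ times a minor of $A_{21}$ is a useful reduction. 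Two things remain genuinely to be filled in, as you acknowledge: the Bott bookkeeping that rules out generators in degree $\ge 7$ and pins down multiplicity one in degrees $3$--$6$, and the hypotheses (rational singularities of $\mathcal{K}_{3,n}$, normality, exactness of the pushed-forward Koszul complex) under which the geometric technique actually yields a minimal free resolution; these are the substantive points Sam establishes, and a sketch that waves at ``a character argument'' for minimality is not yet a proof. As written your proposal is a sound outline of the right proof but does not constitute an independent argument.
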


\section{Singularities and their Resolution}

In this section we study the singular locus  of the Kalman
variety $\mathcal{K}_{d,n} = \mathcal{K}_{d,n}(L)$, and we prove
that the map $p$ in (\ref{eq:twomoreprojections})
is a resolution of singularities. By definition, the {\em singular locus} ${\rm Sing}(\mathcal{K}_{d,n})$
is the subvariety of $\mathcal{K}_{d,n}$ defined by the
vanishing of the minors of size $n^2-n+d+1$ in the Jacobian matrix of
the generators of $I_{d,n}$. 

Returning to the setting of Section 2,  we consider the restricted incidence variety 
$$\, \mathcal{I} \cap q^*(\PP{{}}(L)) \quad = \quad \{(A,v) \in \PP^{n^2-1} \times \PP(L)\,|\,
v\textrm{\ is a eigenvector of\ }A\}\,. $$
This is a smooth variety because the fibers of the second projection $q$ to $\PP(L)$
are linear spaces of the same dimension. 
For $d=1$, the Kalman variety is a linear space, so it is smooth.
For $d \geq 2$, the Kalman variety $\mathcal{K}_{d,n}$
does have singularities:

\begin{lemma}\label{main2}
 For $2\le d\le n-1$ the singular locus of
$\mathcal{K}_{d,n}$ is given by the 
matrices $A$ such that there exists a subspace $L'\subseteq L$
of dimension $2$ which is $A$-invariant.
\end{lemma}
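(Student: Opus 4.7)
The plan is to reformulate the lemma as a rank condition on the reduced Kalman matrix $R(A)$ and then establish the two inclusions separately. Writing $A$ in block form with $L=\K^d\times\{0\}$, a $2$-dimensional $A$-invariant subspace $L'\subseteq L$ corresponds to a $2$-dimensional subspace $U'\subseteq\K^d$ that is both $A_{11}$-invariant and contained in $\ker A_{21}$. By Cayley--Hamilton the largest such $U'$ is precisely $\ker R(A)$, so the condition in the lemma is equivalent to $\operatorname{rank} R(A)\le d-2$, i.e.\ $\dim\ker R(A)\ge 2$; call this locus $S$.

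For the inclusion $S\subseteq\operatorname{Sing}\mathcal{K}_{d,n}$, I will use that $I_{d,n}$ is generated by the $d\times d$-minors of $R(A)$. The first partials of these minors with respect to the entries of $R$ are (signed) $(d-1)\times(d-1)$-minors of $R$, which vanish identically on $\{\operatorname{rank} R\le d-2\}$. By the chain rule, every partial derivative of each generator of $I_{d,n}$ with respect to the entries of $A$ vanishes at $A\in S$, so $T_A\mathcal{K}_{d,n}=\K^{n^2}$ and $A$ is singular. This direction is a formal consequence of the chain rule.

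For the converse, fix $A$ with $\operatorname{rank} R(A)=d-1$; let $u$ span $\ker R(A)$, let $\lambda$ be the eigenvalue of $A_{11}$ on $u$, and set $v=(u,0)\in L$. I will use the resolution $p:\widetilde{\mathcal K}\to\mathcal{K}_{d,n}$ of Section~2 (with $\widetilde{\mathcal K}$ smooth). A direct computation at $(A,v)\in p^{-1}(A)$ identifies $\operatorname{image}(dp_{(A,v)})=\{\dot A\in\K^{n\times n}:\dot A\,v\in W\}$, where $W=\langle v\rangle+(A-\lambda I)(L)$; since $A\notin S$, both $\langle v\rangle \cap (A-\lambda I)(L)=0$ (a nonzero intersection would give a Jordan partner of $v$ in $L$) and $\dim\ker((A-\lambda I)|_L)=1$, so $\dim W=d$, and $\operatorname{image}(dp_{(A,v)})$ has codimension $n-d=\operatorname{codim}\mathcal{K}_{d,n}$. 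It remains to prove $T_A\mathcal{K}_{d,n}\subseteq\operatorname{image}(dp_{(A,v)})$ (the reverse inclusion is automatic from $p(\widetilde{\mathcal K})\subseteq\mathcal{K}_{d,n}$). Given $\dot A\in T_A\mathcal{K}_{d,n}$, equivalently $\dot R(A,\dot A)\,u\in\operatorname{image}(R(A))$, I will expand $\dot R\,u=\alpha\otimes(1,\lambda,\ldots,\lambda^{d-1})^\top+(\Lambda\otimes I)(R(A)\eta)$ with $\alpha=\dot A_{21}u$, $\eta=\dot A_{11}u$ and strictly lower triangular $\Lambda_{ij}=\lambda^{i-1-j}$, then solve $R(A)\zeta=\dot R\,u$ block by block. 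Block $i=0$ gives $A_{21}\zeta=\alpha$; the higher blocks combined with an iterated Cayley--Hamilton argument force $\gamma:=(A_{11}-\lambda I)\zeta-\eta$ to lie in the largest $A_{11}$-invariant subspace of $\ker A_{21}$, which is $\langle u\rangle$ by Step~1. This translates exactly into $\dot A\,v\in W$, so $\dot A\in\operatorname{image}(dp_{(A,v)})$, proving the tangent-space equality and hence the smoothness of $A$.

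The main obstacle is this last piece: unwinding the block-by-block consequences of $\dot R\,u\in\operatorname{image}(R(A))$ and iterating Cayley--Hamilton to confine $\gamma$ to $\langle u\rangle$. This is the only place where the $1$-dimensionality of $\ker R(A)$ is used in an essential way; once it is in hand, the identification $T_A\mathcal{K}_{d,n}=\operatorname{image}(dp_{(A,v)})$ is immediate and everything else is formal.
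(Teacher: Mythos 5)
Your proposal takes a genuinely different route from the paper, so let me address both the comparison and the gap.

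The paper argues entirely geometrically: for one inclusion it invokes Zariski's Main Theorem (if $p^{-1}(A)$ has $\ge 2$ points then $A$ is not normal, hence singular), and for the other it asserts that any two points of $\mathcal{K}_{d,n}$ outside the described set are locally isomorphic via the resolution $p$. You instead reformulate the lemma as a rank condition on the reduced Kalman matrix and work with Jacobians. Your first inclusion (rank $\le d-2$ implies singular, via the chain rule and the fact that the first partials of a $d\times d$ minor are $(d-1)\times(d-1)$ minors) is correct, elementary, and a genuine alternative to the paper's Zariski Main Theorem argument; it has the advantage of making no appeal to normality. The reformulation via Cayley--Hamilton is essentially the paper's Theorem~\ref{sing}, also fine.

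However, the converse inclusion contains a genuine gap, and you flag it yourself as ``the main obstacle.'' Your plan is to show $T_A\mathcal{K}_{d,n}\subseteq\operatorname{image}(dp_{(A,v)})$ by solving $R(A)\zeta=\dot R\,u$ block by block. Carrying out that computation, blocks $2,\ldots,d$ yield exactly $A_{21}A_{11}^{j}\gamma=0$ for $j=0,\ldots,d-2$, i.e.\ $\gamma\in\ker R'(A)$ where $R'$ is $R$ with the last block removed. This is \emph{not} the same as $\gamma\in\ker R(A)=\langle u\rangle$: the top power $A_{21}A_{11}^{d-1}\gamma=0$ never appears, and $\ker R'(A)$ can in principle be strictly larger than $\langle u\rangle$. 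The invocation of ``iterated Cayley--Hamilton'' does not close this; Cayley--Hamilton lets you control $A_{11}^{d}$, but here what is missing is precisely the $(d-1)$-st power condition, not the $d$-th. So the sketched argument, as stated, does not pin $\gamma$ to $\langle u\rangle$ and the proposal is incomplete at exactly the step you identify as critical.

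It is worth noting that the converse direction can be closed without the tangent-space comparison at all, and in a way that is closer in spirit to what you have already established. Your dimension count shows $dp_{(A,v)}$ is injective (since $\dim W=d$ gives $\operatorname{rank}dp=\dim\widetilde{\mathcal K}=\dim\mathcal{K}_{d,n}$), so $p$ is unramified at $(A,v)$; combined with $p^{-1}(A)=\{(A,v)\}$ and the finiteness and birationality of $p$, Nakayama's lemma gives $\mathcal{O}_{\mathcal{K}_{d,n},A}\cong\mathcal{O}_{\widetilde{\mathcal K},(A,v)}$, which is regular. That bypasses the block-by-block computation entirely, and also supplies the precise justification behind the paper's informal ``locally isomorphic'' step.
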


\begin{proof}
Consider a general pair $(A,v)\in \mathcal{I} \cap q^*(\PP{{}}(L))$.
Then $v$ is the only eigenvector of $A$ in $L$ (up to scalar multiples).
Therefore $\,p^{-1}(p(A,v))=p^{-1}(A)=\{(A,v)\}$, that is, the general fiber of
the map $p$ in (\ref{eq:twomoreprojections}) consists of only one point.
This means that the morphism $p$ is birational. On the other hand, if $A$ has  two 
linearly independent eigenvectors in $L$  then the fiber
$p^{-1}(A) $ consists of two points.

A standard result in algebraic geometry, known as {\em Zariski's Main Theorem},
states that the inverse image of a normal point under a birational projective morphism 
is connected. Hence the point $A$ is not normal in $\mathcal{K}_{d,n}$
when $|p^{-1}(A)|  \geq 2$. We infer that
 ${\rm Sing}(\mathcal{K}_{d,n})$ contains the set described in the
statement of Lemma~\ref{main2}.
This containment must be an equality because
being singular is a local property, and
any two points $A,A' \in \mathcal{K}_{d,n}$ in the complement of that set
are locally isomorphic. The latter property holds because
 $(A,v)$ and $(A',v')$ are locally isomorphic points
in $ \mathcal{I} \cap q^*(\PP{{}}(L)) $  if $v$ resp.~$v'$ is
the unique eigenvector of $A$ resp.~$A'$ in $L$.
\end{proof}

\begin{corollary}\label{desing}
The restricted incidence variety
$\mathcal{I} \cap q^*(\PP{{}}(L))$ is 
the desingularization of the Kalman variety $\mathcal{K}_{d,n}$.
The map $p$ in (\ref{eq:twomoreprojections}) is a
resolution of singularities.
\end{corollary}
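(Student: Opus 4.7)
The plan is to verify three properties of the morphism $p$ restricted to $\mathcal{I} \cap q^*(\PP(L))$: the source is smooth, the morphism is proper, and the morphism is birational onto $\mathcal{K}_{d,n}$. Given these three ingredients, $p$ is by definition a resolution of singularities, and essentially every piece has been assembled already earlier in the paper.

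First, I would recall the smoothness observation made at the opening of this section. The projection $q$ restricts to a surjective map $\mathcal{I} \cap q^*(\PP(L)) \to \PP(L)$ whose fibers are all linear subspaces of $\PP^{n^2-1}$ of codimension $n-1$. Thus this restricted incidence variety is a projective subbundle of the trivial $\PP^{n^2-1}$-bundle over the smooth base $\PP(L)$, and it is therefore smooth and irreducible of dimension $n^2-1-(n-d)$, as computed in the proof of Proposition~\ref{thm:degree}.

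Second, $p$ is a projective (hence proper) morphism, since it is obtained by restricting the first-factor projection $\PP^{n^2-1} \times \PP^{n-1} \to \PP^{n^2-1}$ to a closed subvariety; its image is precisely $\mathcal{K}_{d,n}$ by formula (\ref{projkalman}), so $p$ is also surjective onto $\mathcal{K}_{d,n}$.

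Third, I would obtain birationality by reusing the observation at the start of the proof of Lemma~\ref{main2}: a general matrix $A \in \mathcal{K}_{d,n}$ has a unique eigenvector in $L$ up to scalar, so $p^{-1}(A)$ consists of the single reduced point $(A,v)$. Since source and target have the same dimension, this makes $p$ birational. Combining these three facts gives the corollary. There is no substantive obstacle here: the real content lies in Lemma~\ref{main2} and in the opening paragraphs of this section, and the corollary is just the formal observation that a proper birational morphism from a smooth irreducible variety is, by definition, a resolution of singularities of its image.
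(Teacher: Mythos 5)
Your proposal is correct and, in essence, matches the paper's reasoning: birationality comes from the general fiber being a single point (already noted in the proof of Lemma~\ref{main2}), properness is immediate, and smoothness of the source is what remains to be noted. The one small difference is in how smoothness is obtained: you invoke the fiber-bundle observation made at the opening of Section~4 (constant-dimensional linear fibers over $\PP(L)$, hence a projective subbundle of the trivial $\PP^{n^2-1}$-bundle), whereas the paper's proof of the corollary instead cuts down the smooth variety $\mathcal{I}$ by a general $\PP(L)$ via Bertini and then remarks that, up to the $\mathrm{GL}_n$-action, the result is independent of $L$. Both arguments appear in the paper and both are valid, so this is only a cosmetic divergence; your version has the mild advantage of not needing Bertini or the independence-of-$L$ remark.
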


\begin{proof}
The variety defined by the $2 \times 2$-minors of the matrix $(v, Av)$ is smooth.
If we impose the constraints $v \in L$, for a general linear subspace $L \subset \K^n$, 
then the intersection remains smooth, by Bertini's Theorem.
That intersection is precisely our incidence variety 
$\, \mathcal{I} \cap q^*(\PP{{}}(L))$. However, that variety is
independent of $L$.
\end{proof}

\begin{example}
If $d=2$ then ${\rm Sing}(\mathcal{K}_{2,n})$
consists precisely of the matrices $A$ whose
lower left $(n - 2)\times 2$-block $A_{21}$ is zero.
Hence ${\rm Sing}(\mathcal{K}_{2,n})  $ is a
 linear subspace of codimension  $2n-4$.
It contains the $\PP^{n^2-2n} $ over which 
 $\mathcal{K}_{2,n}$ is a cone, by Remark~ \ref{itsacone}.
 \end{example}

We extend our description of the singular locus 
of the Kalman variety by defining
$$\mathcal{K}_{s,d,n}\,=\,\left\{A \in \PP^{n^2-1} \,\,|\,
\textrm{\ there is a\ }A\textrm{-invariant subspace of dimension\ }\ge s
\textrm{\  in\ } L\right\} . $$

Note that $\mathcal{K}_{d,d,n}$ is a linear space of codimension $d(n-d)$,
consisting of the matrices $A$ whose
lower left $(n-d)\times d$-block $A_{21}$ is zero. We have the chain of inclusions
$\,\mathcal{K}_{d,d,n}\subseteq \mathcal{K}_{d-1,d,n}\subseteq\ldots\subseteq \mathcal{K}_{1,d,n}=\mathcal{K}_{d,n}$. The following theorem extends
Lemma \ref{main2}. It concerns the iterated singularity structure of the Kalman variety.

\begin{theorem}\label{codimkaldsn}
For  $1\le s\le d-1$, 
the variety $\,\mathcal{K}_{s+1,d,n}\,$ is precisely the singular locus of $\,\mathcal{K}_{s,d,n}$.
The codimension of $\,\mathcal{K}_{s,d,n}\,$ in $\,\PP^{n^2-1}\,$ is $\,s(n-d)$,
and  the codimension of $\,\mathcal{K}_{s,d,n}\,$ in $\,\mathcal{K}_{s-1,d,n}\,$ equals $n-d$.
In particular, ${\rm codim}\bigl({\rm Sing}(\mathcal{K}_{d,n})\bigr) = n-d$.
\end{theorem}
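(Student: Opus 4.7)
The plan is to mimic the Grassmannian generalization of the arguments in Section 2 and Section 4. Let $G := \G(s, L)$ be the Grassmannian of $s$-dimensional subspaces of $L$, of dimension $s(d-s)$. Form the incidence variety
$$
\mathcal{I}_s \,:=\, \bigl\{(A,L') \in \PP^{n^2-1} \times G \,:\, A(L') \subseteq L' \bigr\},
$$
with projections $p : \mathcal{I}_s \to \PP^{n^2-1}$ and $q : \mathcal{I}_s \to G$, where $p(\mathcal{I}_s) = \mathcal{K}_{s,d,n}$. The fibers of $q$ are linear subspaces of constant codimension $s(n-s)$ in $\PP^{n^2-1}$, since the condition $A(L') \subseteq L'$ is the vanishing of an element of $\mathrm{Hom}(L', \K^n/L')$. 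Hence $\mathcal{I}_s$ is the projectivization of a sub-vector-bundle of the trivial bundle $G \times \K^{n \times n}$; it is smooth and irreducible of dimension $s(d-s) + (n^2-1) - s(n-s)$.

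The map $p$ is generically finite, as one sees from the test case $A = \mathrm{diag}(\lambda_1, \ldots, \lambda_n)$ with distinct $\lambda_i$: its $A$-invariant $s$-planes in $L = \mathrm{span}(e_1,\ldots,e_d)$ are the $\binom{d}{s}$ coordinate subspaces. By upper semicontinuity of fiber dimension, $\dim \mathcal{K}_{s,d,n} = \dim \mathcal{I}_s = (n^2-1) - s(n-d)$, giving both codimension statements at once. Generic finiteness then upgrades to birationality via the following observation: if $L_1 \neq L_2$ are two distinct $s$-dimensional $A$-invariant subspaces of $L$, then $L_1 + L_2 \subseteq L$ is $A$-invariant of dimension $\geq s+1$, forcing $A \in \mathcal{K}_{s+1,d,n}$. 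Since $\mathcal{K}_{s+1,d,n}$ is a proper subvariety of $\mathcal{K}_{s,d,n}$, on the dense open set $\mathcal{K}_{s,d,n} \setminus \mathcal{K}_{s+1,d,n}$ the morphism $p$ has singleton fibers and hence is birational.

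For the singular-locus claim I mirror the argument of Lemma \ref{main2}. For the containment $\mathcal{K}_{s+1,d,n} \subseteq \mathrm{Sing}(\mathcal{K}_{s,d,n})$, a general $A \in \mathcal{K}_{s+1,d,n}$ admits an $(s+1)$-dimensional $A$-invariant subspace $W \subseteq L$ on which $A$ has $s+1$ distinct eigenvalues, producing $s+1 \geq 2$ distinct $A$-invariant $s$-planes inside $W \subseteq L$ and hence $|p^{-1}(A)| \geq 2$. By Zariski's Main Theorem, $A$ is non-normal, hence singular, in $\mathcal{K}_{s,d,n}$; closedness of $\mathrm{Sing}(\mathcal{K}_{s,d,n})$ combined with irreducibility of $\mathcal{K}_{s+1,d,n}$ then yields the full containment. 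For the reverse, every $A \in \mathcal{K}_{s,d,n} \setminus \mathcal{K}_{s+1,d,n}$ has a unique lift $(A, L_0) \in \mathcal{I}_s$; two such lifts are locally isomorphic in the smooth variety $\mathcal{I}_s$, and via the birational bijection $p$ their images are locally isomorphic in $\mathcal{K}_{s,d,n}$, so all points of the complement share the same (smooth) local structure as the generic point. Specializing to $s = 1$ (where $\mathcal{K}_{1,d,n} = \mathcal{K}_{d,n}$) gives $\mathrm{codim}\,\mathrm{Sing}(\mathcal{K}_{d,n}) = n-d$.

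I expect the main obstacle to be this last step of transporting smoothness from $\mathcal{I}_s$ down to $\mathcal{K}_{s,d,n}$: one must genuinely verify that $p$ is a local isomorphism over $\mathcal{K}_{s,d,n} \setminus \mathcal{K}_{s+1,d,n}$, not merely a proper birational bijection of sets. This can be handled by a tangent-space computation showing that $dp_{(A,L_0)}$ is injective at a typical such lift, combined with Zariski's Main Theorem, or alternatively by first establishing normality of $\mathcal{K}_{s,d,n}$, after which any bijective proper birational morphism onto it is automatically an isomorphism.
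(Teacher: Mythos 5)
Your proposal takes essentially the same route as the paper. The paper introduces the incidence variety $\mathcal{KS}_{s,d,n}=\{(A,L')\in\PP^{n^2-1}\times Gr(s,L): L' \text{ is } A\text{-invariant}\}$ (your $\mathcal{I}_s$), computes its dimension from the constant-dimensional linear fibers of the projection to $Gr(s,L)$, notes that the first projection $p$ is bijective over $\mathcal{K}_{s,d,n}\setminus\mathcal{K}_{s+1,d,n}$, uses Zariski's Main Theorem to see that a general point of $\mathcal{K}_{s+1,d,n}$ (having $s+1$ distinct fiber points) is non-normal and hence singular, and then asserts the reverse containment ``by the argument used for Lemma~\ref{main2},'' i.e.\ a local-isomorphism claim on the open complement. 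Two small notes: your observation that $L_1\neq L_2$ forces $L_1+L_2\subseteq L$ to be $A$-invariant of dimension $\geq s+1$ is a cleaner justification of generic injectivity than the paper's terse ``$A$ determines its invariant subspace uniquely,'' and your closing caveat is well-taken --- the paper itself transports smoothness from $\mathcal{I}_s$ to $\mathcal{K}_{s,d,n}$ via the same ``locally isomorphic'' shortcut without verifying that $p$ is an immersion (not merely bijective) over the complement, so the obstacle you flag is present in the original argument too.
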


\begin{proof}
We generalize the incidence variety used in the proof of  Lemma \ref{main2}. Let
$$\mathcal{KS}_{s,d,n}\,\,= \,\, \{(A,L')\in \PP^{n^2-1}\times Gr(s,L) \, | \,
L'\textrm{\ is\ }A\textrm{-invariant}\}.$$
     This  is a smooth variety of dimension $n^2-1-s(n-d)$ because the second projection to the 
Grassmannian $Gr(s,L)$ is surjective and  its fibers 
are linear spaces of dimension $n^2-1-s(n-s)$.
The first projection $p$ is bijective when restricted to
$p^{-1}\left(\mathcal{K}_{s,d,n} \backslash \mathcal{K}_{s+1,d,n}\right)$
because a matrix $A\in \mathcal{K}_{s,d,n}\setminus\mathcal{K}_{s+1,d,n}$
determines uniquely its $A$-invariant subspace $L'$.
It follows that $\mathcal{KS}_{s,d,n}$ is the desingularization of $\mathcal{K}_{s,d,n}$.

A general matrix $A\in {K}_{s+1,d,n}$ has $s+1$ independent eigenvectors in $L$,
so  its fiber $p^{-1}(A)$ consists of ${{s+1}\choose s}=s+1$ distinct points.
Again by Zariski's Main Theorem, these are singular points of $\mathcal{K}_{s,d,n}$,
so that  ${\rm Sing}(\mathcal{K}_{s,d,n})$ contains $\mathcal{K}_{s+1,d,n}$.
However, this containment is an equality, by the argument used for  Lemma \ref{main2}.
\end{proof}

We next state a result about the defining equations of 
the iterated singular loci. 

\begin{theorem}\label{sing}
The variety  $\mathcal{K}_{s,d,n}$ is the zero set
of the $(n-s+1) {\times} (n-s+1)$-minors
of the Kalman matrix (\ref{eq:kalman}), or equivalently
the zero set of the $(d-s+1) {\times} (d-s+1)$-minors
of the reduced Kalman matrix (\ref{eq:kalman3}).
In particular,  ${\rm Sing}(\mathcal{K}_{d,n})$ 
consists of all matrices $A$ such that the reduced Kalman matrix has rank $\leq d-2$.
\end{theorem}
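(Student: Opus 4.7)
The plan is to first prove the Kalman-matrix version of the theorem directly by identifying a kernel with an invariant subspace, and then reduce to the reduced Kalman matrix by an explicit row reduction together with a kernel-equality argument.

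\textbf{Step 1: Kernel interpretation.} For any matrix $A$, denote by $K_A$ the Kalman matrix in (\ref{eq:kalman}). I will show that $\ker(K_A) \subseteq \K^n$ is precisely the largest $A$-invariant subspace of $L$. A vector $v$ lies in $\ker(K_A)$ if and only if $C A^i v = 0$ for $i=0,1,\ldots,d$, equivalently $A^i v \in L$ for $i=0,1,\ldots,d$. Any $A$-invariant subspace $V\subseteq L$ is clearly contained in $\ker(K_A)$. For the reverse inclusion, given $v\in \ker(K_A)$, the $d+1$ vectors $v,Av,\ldots,A^d v$ lie in the $d$-dimensional space $L$, so they admit a nontrivial linear relation. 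Letting $j\le d$ be maximal with a nonzero coefficient yields a recursion expressing $A^j v$ as a linear combination of $v,\ldots,A^{j-1}v$; iterating shows that $A^k v$ lies in the span of $v,Av,\ldots,A^{j-1}v \subseteq L$ for every $k\ge 0$. Hence $\ker(K_A)$ itself is $A$-invariant, and it is the largest such subspace contained in $L$.

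\textbf{Step 2: From rank to the iterated Kalman variety.} By Step 1, $A\in \mathcal{K}_{s,d,n}$ precisely when $\dim \ker(K_A) \ge s$, i.e.\ $\operatorname{rank}(K_A)\le n-s$, which is equivalent to the vanishing of all $(n-s+1)\times (n-s+1)$-minors of $K_A$. This proves the first assertion; the special case $s=1$ recovers Proposition~\ref{thm:kalman}.

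\textbf{Step 3: Reduction to the reduced Kalman matrix.} I take $C=({\bf 0}\,|\,{\bf I})$ as in the normalization of the paper. The first block of $K_A$ contains an $(n-d)\times(n-d)$ identity in the last $n-d$ columns, so row operations kill the last $n-d$ columns in every subsequent block $CA^i$, leaving $([A^i]\;|\;0)$. Hence $\operatorname{rank}(K_A) = (n-d) + \operatorname{rank}(S_A)$, where $S_A$ is the small Kalman matrix (\ref{eq:kalman2}). Thus the $(n-s+1)$-minors of $K_A$ vanish iff the $(d-s+1)$-minors of $S_A$ vanish.

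\textbf{Step 4: Small vs.\ reduced Kalman matrix.} Finally, I will show $S_A$ and the reduced Kalman matrix $R_A$ of (\ref{eq:kalman3}) have the same rank, by showing their kernels in $\K^d$ coincide. By induction on $i$: if $A_{21}A_{11}^{j}v = 0$ for all $j<i$, a direct block computation of $A^i\bigl(\begin{smallmatrix}v\\0\end{smallmatrix}\bigr)$ gives $[A^i]v = A_{21}A_{11}^{i-1}v$, so the conditions $[A^i]v=0$ for $i=1,\ldots,d$ are equivalent to $A_{21}A_{11}^{i-1}v=0$ for $i=1,\ldots,d$. Combining Steps 2--4 gives the second equivalent description in terms of the reduced Kalman matrix, and the case $s=d-1$ is exactly the statement about $\operatorname{Sing}(\mathcal{K}_{d,n})$.

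The main obstacle is Step 1: one must argue that $\ker(K_A)$ is genuinely $A$-invariant, and not merely a locus where the first $d+1$ iterates land in $L$. The key trick is to exploit pigeonholing of $d+1$ vectors in a $d$-dimensional space to force a recursion that controls all higher iterates; Step 3 is a clean row reduction, and Step 4 is a short induction, so the geometric content is concentrated in Step 1.
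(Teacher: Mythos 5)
Your proof is correct and follows essentially the same strategy as the paper: establish the "only if" direction by noting that an invariant subspace lies in the kernel, and establish the converse by showing the kernel of the Kalman matrix is itself $A$-invariant. The one technical difference is in how $A$-invariance is proved: you argue coordinate-free in $\K^n$, pigeonholing the $d+1$ vectors $v, Av, \ldots, A^d v$ inside the $d$-dimensional space $L$ and iterating a maximal-coefficient recursion, whereas the paper passes immediately to the block form and invokes the fact that $A_{11}^d w$ is a combination of $w, A_{11}w, \ldots, A_{11}^{d-1}w$ because $A_{11}$ is $d\times d$ (essentially Cayley--Hamilton for $A_{11}$). These are equivalent in content. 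You also write out explicitly, in Steps 3 and 4, the row reduction from the Kalman matrix to the small Kalman matrix and the kernel identification between the small and reduced Kalman matrices; the paper compresses this into the single unproved assertion that kernel vectors are exactly those of the form $(w,0)^T$ with $A_{21}A_{11}^i w = 0$ for $i=0,\ldots,d-1$. Spelling this out is a genuine improvement in rigor, since the reduction is actually needed to justify the paper's block-form argument.
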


\begin{proof} We must show that an $n {\times} n$-matrix $A$ has an $s$-dimensional $A$-invariant subspace $L'\subseteq L={\rm ker}(C)$
if and only if the Kalman matrix (\ref{eq:kalman}) has rank $\le n-s$.
 We begin with the only-if direction.
 If $A$ has an $s$-dimensional invariant subspace $L'\subseteq L$,
then every $v\in L'$ satisfies $A^iv\in L'$,
which implies $CA^iv=0$. Hence $L'$ is contained in the kernel of the Kalman matrix (\ref{eq:kalman}),
which must have rank $\le n-s$.  Conversely, we prove that
the kernel of the Kalman matrix is $A$-invariant.
We assume that $L = \K \{e_1,\ldots,e_d\}$.
 Any vector $v$ in the kernel of the Kalman matrix has the form $v = \begin{pmatrix} w \\ 0 \end{pmatrix}$
with $A_{21} A_{11}^i w = 0 $ and $i = 0,1,\ldots,d-1$.
Then $Av=\begin{pmatrix} A_{11}w \\ 0 \end{pmatrix}$. This
vector is in the kernel of the Kalman matrix if $A_{21} A_{11}^i (A_{11}w) = A_{21} A_{11}^{i+1}w=0 $
for $i = 0,1,\ldots,d-1$. Since $A_{11}^dw$ is a linear combination of
$w,A_{11}w,\ldots A_{11}^{d-1}w$, because $A_{11}$ is a $d\times d$ matrix,  these equalities are satisfied.
This shows that the kernel of the Kalman matrix is $A$-invariant.
Hence $A\in\mathcal{K}_{s,d,n}$.
\end{proof}

A case of special interest is $s = d-1$. The variety $\mathcal{K}_{d-1,d,n}$ is
cut out by the $2 {\times} 2$-minors of the reduced Kalman matrix (\ref{eq:kalman3}).
Experimental evidence suggests that some of the nice combinatorics seen in Section 3
generalizes to this case. In particular, the ideal generated by these $2 {\times} 2$-minors
of (\ref{eq:kalman3}) appears to be prime,
 and  we conjecture that its initial ideal in the lexicographic term order is radical.

\smallskip

We close this section by stating a formula for the degrees of our iterated singular loci.
It will be proved in the next section. Our proof technique involves results from representation theory
that require  the field $\K$ to have characteristic $0$ (see \cite{Fu2}).

\begin{theorem}
\label{thm:degreesing}
Let ${\rm char}(\K) = 0$.
The degree of the subvariety
${\mathcal{K}}_{s,d,n}$ in $\, \PP^{n^2-1}$ is the coefficient of $ (x_1 x_2 \cdots
x_s)^{d-s}$ in
the expansion of the symmetric rational function
\begin{equation}
\label{eq:degformula1}
 \frac{\prod_{i=1}^s(1+x_i)^n}{\prod_{i,j=1}^s \bigl(1+(x_i-x_j) \bigr)}
 \end{equation}
 as a $\Z$-linear combination of Schur polynomials $\,s_\lambda(x_1,\ldots,x_s)$.
In particular, for fixed $d > s > 0 $,  the degree of the variety $\mathcal{K}_{s,d,n}$ is a polynomial
in $n$ of the form
\begin{equation}
\label{eq:degformula2}
 \deg ({\mathcal{K}}_{s,d,n}) \,\, \,=\,\,\,\,
\frac{\deg Gr(s,L)}{[s(d-s)]!} \cdot n^{s(d-s)}\,+\,\cdots\textrm{ (lower terms in $n$)},
\end{equation}
where $\,\deg Gr(s,L)=\frac{1!2!\ldots (s-1)![s(d-s)]!}{(d-s)!(d-s+1)!\ldots (d-1)!} \,$
is the degree of the Grassmannian.
\end{theorem}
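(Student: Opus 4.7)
The plan is to combine the desingularization from Theorem~\ref{codimkaldsn} with a vector bundle calculation on the Grassmannian $Gr(s,L)$. Since the condition $AL'\subseteq L'$ is linear in the matrix $A$, globalizing over $Gr(s,L)\subset Gr(s,n)$ realizes $\mathcal{KS}_{s,d,n}$ as the projective bundle $\PP(\mathcal{V})$, where $\mathcal{V}$ fits in the short exact sequence
\[
0\longrightarrow \mathcal{V} \longrightarrow \K^{n^2}\otimes \mathcal{O} \longrightarrow \mathcal{S}^*\otimes \mathcal{Q} \longrightarrow 0,
\]
with $\mathcal{S}$ the tautological subbundle (rank $s$) and $\mathcal{Q}=(\K^n\otimes \mathcal{O})/\mathcal{S}$ the tautological quotient (rank $n-s$). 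Because $p:\mathcal{KS}_{s,d,n}\to \mathcal{K}_{s,d,n}$ is birational and the pullback $p^*H$ of the hyperplane class agrees with the tautological class $\xi=c_1(\mathcal{O}_{\PP(\mathcal{V})}(1))$,
\[
\deg \mathcal{K}_{s,d,n} \,=\, \int_{\PP(\mathcal{V})} \xi^{\dim \PP(\mathcal{V})} \,=\, \int_{Gr(s,L)} s_{s(d-s)}(\mathcal{V}) \,=\, \int_{Gr(s,L)} c_{s(d-s)}(\mathcal{S}^*\otimes\mathcal{Q}),
\]
by the projective-bundle pushforward formula and the identity $s(\mathcal{V})=1/c(\mathcal{V})=c(\mathcal{S}^*\otimes\mathcal{Q})$ extracted from the sequence above.

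Next I would recast this Chern integral in the Schur-coefficient form~(\ref{eq:degformula1}). Letting $x_1,\ldots,x_s$ be the Chern roots of $\mathcal{S}^*$, and using that $\mathcal{Q}=\mathcal{Q}_L\oplus(\K^{n-d}\otimes \mathcal{O})$ splits into a rank-$(d-s)$ piece with Chern roots $y_1,\ldots,y_{d-s}$ plus a trivial rank-$(n-d)$ summand, the splitting principle gives $c(\mathcal{S}^*\otimes\mathcal{Q})=\prod_{i=1}^s(1+x_i)^{n-d}\prod_{i,k}(1+x_i+y_k)$. To eliminate the $y_k$'s I would apply Atiyah--Bott localization with respect to the diagonal torus action on $L$: the $\binom{d}{s}$ fixed points $L_I\in Gr(s,L)$ contribute explicit rational summands whose tangent-weight denominators $\prod_{i\in I,j\notin I}(x_i-x_j)^{-1}$ combine, after reorganization, into the symmetric rational function $\prod(1+x_i)^n/\prod_{i,j=1}^s(1+x_i-x_j)$. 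The integral then equals the coefficient of $s_{((d-s)^s)}=(x_1\cdots x_s)^{d-s}$ in the Schur expansion of that rational function, because on $Gr(s,L)$ the Schur polynomials $s_\lambda$ for partitions $\lambda$ in the $s\times(d-s)$ rectangle integrate to $1$ when $\lambda=((d-s)^s)$ and to $0$ otherwise.

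For the asymptotic formula~(\ref{eq:degformula2}), only the factor $\prod(1+x_i)^n$ depends on $n$. Its $x^\alpha$-coefficient $\prod\binom{n}{\alpha_i}$ has leading term $n^{|\alpha|}/\prod\alpha_i!$, so summing over all $\alpha\in\N^s$ with $|\alpha|=s(d-s)$ and using the identity $\sum_{|\alpha|=N}\prod x_i^{\alpha_i}/\alpha_i!=(x_1+\cdots+x_s)^N/N!$ produces the leading contribution
\[
\frac{n^{s(d-s)}}{[s(d-s)]!}\,(x_1+\cdots+x_s)^{s(d-s)}.
\]
Since $c_1(\mathcal{S}^*)=x_1+\cdots+x_s$ is the Pl\"ucker hyperplane class on $Gr(s,L)$, integrating yields the leading coefficient $\deg Gr(s,L)/[s(d-s)]!$ as claimed.

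The hardest step is the localization reorganization in the second paragraph: matching the natural sum over $\binom{d}{s}$ fixed-point contributions to the compact rational function in~(\ref{eq:degformula1}), and in particular absorbing the exponent $n-d$ from the trivial quotient piece into an exponent $n$ at the cost of introducing the Vandermonde-like denominator $\prod(1+x_i-x_j)$. This is also where the characteristic-zero hypothesis enters essentially, through the Schur-polynomial identities needed to pass from the equivariant localization expansion to the stated Schur-coefficient description.
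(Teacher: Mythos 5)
Your first paragraph is correct and is a valid (if slightly rephrased) version of the paper's Theorem~\ref{degkdns}: the projective-bundle/Segre-class argument $\deg \mathcal{K}_{s,d,n}=\int_{Gr(s,L)} s_{s(d-s)}(\mathcal{V})=\int_{Gr(s,L)} c_{s(d-s)}(\mathcal{S}^*\otimes\mathcal{Q})$ is equivalent to the paper's statement $\deg=c_{s(d-s)}(TGr(s,\K^n))\cdot[c_s(S^*)]^{n-d}$ in $H^*(Gr(s,\K^n))$, since $[c_s(S^*)]^{n-d}$ is the class of $Gr(s,L)$. Your final paragraph, deducing the leading asymptotics~(\ref{eq:degformula2}) from~(\ref{eq:degformula1}), is also correct and in fact fills in something the paper only asserts.

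The gap is the middle step, converting the Chern integral to the coefficient of $(x_1\cdots x_s)^{d-s}$ in the rational function~(\ref{eq:degformula1}). Having chosen to restrict to $Gr(s,L)$ first, you get the Chern roots $y_1,\ldots,y_{d-s}$ of $\mathcal{Q}_L$ tangled up with the $x_i$'s, and you propose to eliminate them by Atiyah--Bott localization. This is not carried out, and as described it conflates two different sets of variables: the $\binom{d}{s}$ fixed-point contributions would be rational functions in $d$ equivariant parameters $\chi_1,\ldots,\chi_d$, whereas the target~(\ref{eq:degformula1}) is a symmetric power series in the $s$ formal Chern roots $x_1,\ldots,x_s$ of $\mathcal{S}^*$. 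The reorganization ``after reorganization, into the symmetric rational function\ldots'' is precisely the thing that needs to be proved, and localization is the wrong tool for it. The paper's route avoids this entirely: compute $c(TGr(s,\K^n))$ on the \emph{ambient} Grassmannian $Gr(s,\K^n)$ directly from the Euler sequence~(\ref{Euler}) tensored by $S^*$, giving $\,p_{TGr(s,\K^n)}(t)=\prod_{i=1}^s(1+x_it)^n\big/\prod_{i,j=1}^s(1+(x_i-x_j)t)$. Intersecting the degree-$s(d-s)$ piece with $[c_s(S^*)]^{n-d}=s_{((n-d)^s)}$ and extracting the top class $s_{((n-s)^s)}$ of $Gr(s,\K^n)$ is then, by the Pieri/Littlewood--Richardson rule (adding $n-d$ boxes to each row), exactly the coefficient of $s_{((d-s)^s)}=(x_1\cdots x_s)^{d-s}$. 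You should replace the localization discussion with this Euler-sequence computation on $Gr(s,\K^n)$; it also removes the need to even introduce $\mathcal{Q}_L$ and the $y_k$'s.
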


\begin{example}
Consider the Kalman variety $\mathcal{K}_{3,5}$
in Example \ref{ex:fuenfdrei}.
Its singular locus $\mathcal{K}_{2,3,5}$
is the codimension $4$ variety defined
the $2 {\times} 2$-minors of the
$6 {\times} 3$-matrix in (\ref{ex:fuenfdrei}).
Here $ s=2, d=3, n=5$, and the symmetric rational function (\ref{eq:degformula1}) equals
$$ \frac{ (1 + x_1)^5 (1+x_2)^5}{(1-(x_1-x_2)^2)} \,= \, 1 + 5 (x_1 + x_2) +
{\bf 12}  x_1 x_2 \,+\, 11 (x_1^2+x_1x_2+x_2^2) + \hbox{higher terms.} $$
We see that $\mathcal{K}_{2,3,5} \subset \PP^{14}$ has degree $12$.
The lexicographic Gr\"obner basis for
$\mathcal{K}_{2,3,5}$ consists of $3$ quadrics,
$9$ cubics and $3$ quartics, with square-free initial terms. \qed
\end{example}

\section{Kalman varieties via vector bundles}

In this section we develop a geometric interpretation of the
Kalman variety  and its desingularization, in terms of
the tangent bundle of a suitable Grassmannian. This works also for the
varieties $\mathcal{K}_{s,d,n}$ introduced in the previous section.
This includes  $\mathcal{K}_{d,n}$ for $s=1$.
We shall assume from now on that $\K$ has characteristic $0$.

Let $\textrm{ad\ }\K^n$ denote the vector space of $n {\times} n$-matrices with trace zero.
We shall consider our varieties in the corresponding projective space $\PP^{n^2-2}=\PP(\textrm{ad\ }\K^n)$.
To be precise, we write
 $\tilde{\mathcal{K}}_{s,d,n}$ for the intersection of $\mathcal{K}_{s,d,n}$
with $\PP^{n^2-2}$. This is a subvariety of codimension $s(n-d)$.
We do not lose any information when passing from  $\mathcal{K}_{s,d,n}$ to $\tilde{\mathcal{K}}_{s,d,n}$,
because  $\mathcal{K}_{s,d,n}$ is a cone with vertex at the identity matrix
${\bf I}_{n \times n}$. In other words, the assumption ${\rm trace}(A) = 0$ is no restriction in generality because $A$ and 
$\,A - \frac{{\rm trace}(A)}{n} \cdot {\bf I}_{n \times n}$ have the same eigenvectors.
The Kalman variety is identified with
 $\tilde{\mathcal{K}}_{d,n}=\tilde{\mathcal{K}}_{1,d,n}$ which has codimension
$n-d$ and degree  $\binom{n}{d-1}$ in $\PP^{n^2-2}$.
 We shall derive the degree of
 $\tilde{\mathcal{K}}_{s,d,n}$, as promised in Theorem~\ref{thm:degreesing}.
 
Let $Gr(s,\K^n)$ be the $s(n-s)$-dimensional
Grassmannian of $s$-planes in $\K^n$. 
Let $S$ be the universal bundle of rank $s$ and $Q$ the quotient bundle of rank $n-s$.
They appear in the following exact sequence, with $\OO$  the structure sheaf of
$Gr(s,\K^n)$:
\begin{equation}\label{Euler}
0\rig{}S\rig{}\OO\otimes\K^n\rig{}Q\rig{}0
\end{equation}
The fiber of $TGr(s,\K^n)$ at $x=[V]$, where $V\subset\K^n$ is a $s$-dimensional subspace
of $\K^n$, is isomorphic to
$ Hom(V,\K^n/V)$. This means that $TGr(s,\K^n)=Hom(S,Q)$.
Every traceless matrix $A\in\textrm{ad}\ \K^n$ induces a section $s_A\in H^0(TGr(s,\K^n))$
which corresponds to the composition $V\rig{i}\K^n\rig{A} \K^n\rig{\pi} \K^n/V$
on the fiber of $[V]$. The section $s_A$ vanishes in $[V]$ if and only if 
$\pi(A(v))=0$ for all $v \in V$. This implies

\begin{lemma}\label{eigenvanish}
For $A \in$ \textrm{ad }$\K^n$, the section $s_A$ vanishes in $V$ if and only if 
$A(V)\subseteq V$.
\end{lemma}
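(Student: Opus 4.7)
The proof is essentially an unpacking of the definition of $s_A$ given just before the lemma, so the plan is short. First I would fix a point $[V]\in Gr(s,\K^n)$ and identify the fiber of $TGr(s,\K^n)=Hom(S,Q)$ at $[V]$ with $Hom(V,\K^n/V)$, as already noted in the paragraph preceding the lemma. By the very construction of $s_A$, its value at $[V]$ is the linear map $\pi\circ A\circ i\colon V\to\K^n/V$, where $i\colon V\hookrightarrow\K^n$ is the inclusion and $\pi\colon\K^n\to\K^n/V$ is the quotient projection.

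Next I would translate ``$s_A$ vanishes at $V$'' into ``the map $\pi\circ A\circ i$ is the zero homomorphism''. This in turn is equivalent to $\pi(A(v))=0$ for every $v\in V$, i.e. $A(v)\in\ker(\pi)=V$ for every $v\in V$, which is exactly the condition $A(V)\subseteq V$. Conversely, if $A(V)\subseteq V$, the composition visibly factors through the zero map to the quotient, so $s_A$ vanishes at $[V]$.

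The tracelessness assumption $A\in\mathrm{ad}\,\K^n$ is not actually needed for the biconditional itself; it is only relevant for making $s_A$ a well-defined section of $TGr(s,\K^n)$ coming from the adjoint representation (scalar matrices act trivially on the Grassmannian), so I would mention it only as a brief aside. There is no genuine obstacle here: the argument is a one-line chase through the exact sequence (\ref{Euler}) restricted to the fiber over $[V]$, and the content of the lemma is that the scheme-theoretic zero locus of $s_A$ is precisely the locus of $A$-invariant $s$-planes in $\K^n$, which is the geometric fact that will be used in the sequel to compute degrees via Chern classes.
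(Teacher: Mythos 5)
Your proposal is correct and follows exactly the route the paper takes: the paper treats the lemma as an immediate unwinding of the definition of $s_A$ (the sentence just before the lemma already observes that $s_A$ vanishes at $[V]$ iff $\pi(A(v))=0$ for all $v\in V$), which is precisely your chain of equivalences.
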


By the Borel-Weil Theorem, all sections $s$ of the tangent bundle $TGr(s,\K^n)$
have the form $s=s_A$ for some $A\in \textrm{ad }\K^n$. Equivalently,
 $\,H^0(TGr(s,\K^n))=\textrm{ad }\K^n$.
Since the generic traceless $n\times n$ matrix has $n$ distinct eigenvectors, a
generic section of $TGr(s,\K^n)$ vanishes in ${n\choose s}$ points,
corresponding to the subspaces spanned by $s$ of the $n$ eigenvectors.
We define   $M^{s,n}$ to be the kernel of the evaluation map 
$$H^0(TGr(s,\K^n))\otimes\OO\,\rig{}\,TGr(s,\K^n).$$
Thus $M^{s,n}$  is a homogeneous bundle on $Gr(s,\K^n)$ of rank $n(n{-}s)+(s^2{-}1)$.
For $s=1$ we write $M^{1,n}=: M^n$. This is a homogeneous bundle of rank $n(n{-}1)$
on the projective space $\PP^{n-1}$. For example, on 
$Gr(1,\K^2) = \PP^1$ we have  $M^2=\OO(-1)^2$.

The Kalman variety of a subspace $L$ in $\K^n$ (for $s = 1$) and its  singular strata (for $s \geq 2$)
arise from the projectivization of the restriction of $M^{s,n}$ to $Gr(s,L)$.
Namely, we consider the product
$X=\PP{{}}(\textrm{ad\ }\K^n)\times Gr(s,\K^n)$
with its two projections
\begin{equation}\label{twoprojections}\begin{array}{ccccc}&&X\\
&\swar{p}&&\sear{q}\\
\PP{{}}(\textrm{ad\ }\K^n)&&&&Gr(s,\K^n)
\end{array}
\end{equation}

\begin{theorem}\label{desing2}The desingularization of the variety 
$\tilde{\mathcal{K}}_{s,d,n}$ 
is isomorphic to the projective bundle
$\PP{{}}(M^{s,n}_{|Gr(s,L)})$ where $Gr(s,L)\subset Gr(s,\K^n)$ 
parametrizes planes in~ $L$.

\end{theorem}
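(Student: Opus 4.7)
The plan is to identify the desingularization $\mathcal{KS}_{s,d,n}$ constructed in the proof of Theorem \ref{codimkaldsn} with the projective bundle $\PP(M^{s,n}_{|Gr(s,L)})$, after restricting to traceless matrices so that the base of the cone is removed.

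First, I would set up the traceless version of the incidence variety: let
\[
\tilde{\mathcal{KS}}_{s,d,n} \,=\, \{\,([A],V)\in \PP(\mathrm{ad\ }\K^n)\times Gr(s,L)\,:\, A(V)\subseteq V\,\}.
\]
Because $\mathcal{K}_{s,d,n}$ is a cone with vertex $[\mathbf{I}_{n\times n}]$, and since $A$ and $A-\tfrac{\mathrm{trace}(A)}{n}\mathbf{I}$ have exactly the same invariant subspaces, the first projection $p\colon \tilde{\mathcal{KS}}_{s,d,n}\to \tilde{\mathcal{K}}_{s,d,n}$ is the restriction of the desingularization of Theorem \ref{codimkaldsn}; in particular it is a resolution of singularities.

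Next, I would compare $\tilde{\mathcal{KS}}_{s,d,n}$ with the projective bundle on the right-hand side fiber by fiber over $Gr(s,L)$. By Lemma \ref{eigenvanish}, the fiber of the second projection over a point $[V]\in Gr(s,L)$ is
\[
q^{-1}([V]) \,=\, \PP\bigl(\{A\in \mathrm{ad\ }\K^n : s_A([V])=0\}\bigr).
\]
On the other hand, by definition $M^{s,n}$ is the kernel of the evaluation map $H^0(TGr(s,\K^n))\otimes \OO \to TGr(s,\K^n)$, whose fiber at $[V]$ is precisely $\{A\in \mathrm{ad\ }\K^n:s_A([V])=0\}$ (using $H^0(TGr(s,\K^n))=\mathrm{ad\ }\K^n$ via Borel--Weil, as recalled just before the statement). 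So the two fibers coincide as projective subspaces of $\PP(\mathrm{ad\ }\K^n)$.

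To globalize this fiber-wise identification, I would exhibit $\tilde{\mathcal{KS}}_{s,d,n}$ as the zero locus in $\PP(\mathrm{ad\ }\K^n)\times Gr(s,L)$ of the tautological section of the bundle $\OO(1)\boxtimes TGr(s,\K^n)_{|Gr(s,L)}$ associated with the universal pair $([A],[V])\mapsto s_A([V])$. Dually, this section is induced by the evaluation map $\mathrm{ad\ }\K^n\otimes \OO \to TGr(s,\K^n)$ pulled back to $Gr(s,L)$, whose kernel is $M^{s,n}_{|Gr(s,L)}$. The zero locus of the corresponding section of a quotient bundle on $\PP(\mathrm{ad\ }\K^n)\times Gr(s,L)$ is the projectivization of that kernel subbundle, yielding an isomorphism $\tilde{\mathcal{KS}}_{s,d,n}\simeq \PP(M^{s,n}_{|Gr(s,L)})$ over $Gr(s,L)$. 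Combined with the first step, this proves the theorem.

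The main obstacle is making the third step rigorous: one has to check that the fiber-wise coincidence really does globalize to an isomorphism of schemes over $Gr(s,L)$, i.e.\ that the subbundle $M^{s,n}_{|Gr(s,L)}$ of the trivial bundle $\mathrm{ad\ }\K^n\otimes \OO$ matches the family of linear spaces cut out by the $A$-invariance condition with its natural scheme structure. This is essentially formal, but requires verifying that the section of $TGr(s,\K^n)$ varies linearly in $A$ in such a way that the incidence variety inherits the expected subbundle structure rather than some nonreduced thickening; that is precisely the content of the Borel--Weil identification and the definition of $M^{s,n}$ as the kernel of the evaluation map.
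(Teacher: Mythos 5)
Your proof is correct and follows essentially the same route as the paper: set up the incidence variety over $Gr(s,L)$ in $\PP(\mathrm{ad\ }\K^n)\times Gr(s,\K^n)$, invoke the earlier resolution-of-singularities result, and identify the $q$-fibers with the fibers of $M^{s,n}$ via Lemma \ref{eigenvanish} and Borel--Weil. You are somewhat more explicit than the paper about why the fiber-wise identification globalizes to a bundle isomorphism, but this extra care is consistent with (and implicit in) the paper's argument.
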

\begin{proof} 
Generalizing (\ref{projkalman}), we have
$\tilde{\mathcal{K}}_{d,s,n}(L) = p_*({\mathcal J}\cap q^*Gr(s,L))$, where
\begin{equation}
\label{eq:incidence}
{\mathcal J}=\{(A,V) \in \PP^{n^2-2} \times Gr(s,\K^n)\,|\,A(V)\subseteq V\} 
\end{equation}
is the incidence variety.
By the same argument as in Corollary~\ref{desing}, the subvariety
$\mathcal{KS}_{s,d,n}={\mathcal J}\cap q^*Gr(s,L)$ of $X$
coincides with the desingularization of $\tilde{\mathcal{K}}_{s,d,n}$ .
The fiber $q^{-1}(q(A,V))=q^{-1}(V)$ consists of all endomorphisms $A'$ 
that have $V$ as an invariant subspace.
By Lemma \ref{eigenvanish}, this is the set of all sections of the tangent bundle which vanish at $V$,
and hence it coincides with the fiber of $M^{s,n}$ at $V$.
\end{proof}

\begin{remark} After this paper had been written and refereed, we received Steven Sam's preprint \cite{Sam} 
which uses Theorem \ref{desing2} and the Weyman method to obtain the ideal sheaves
of ${\mathcal{K}}_{2,n}$ and ${\mathcal{K}}_{3,n}$, among other many results. 
In particular, Sam proved our conjecture (Theorem \ref{3generators})
on the defining equations of $\mathcal{K}_{3,n}$.
 \end{remark}

The restriction $S_{|Gr(s,L)}$ is the universal bundle on $Gr(s,L)$. We denote
this bundle again by $S$.
The restriction $Q_{|Gr(s,L)}$ splits as $Q_L\oplus\OO^{n-d}$, where $Q_L$ is the quotient bundle 
of rank $d-s$ on $Gr(s,L)$.
The bundle appearing in Theorem \ref{desing2} can be then described in the following way:

\begin{proposition} 
\label{prop:letssplit} We have
$\,M^{s,n}_{|Gr(s,L)}\,=\, M^{s,d}\oplus\OO^{n(n-d)}\oplus\left[(Q_L)^*\right]^{n-d} $.
\end{proposition}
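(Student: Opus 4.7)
The plan is to choose a splitting $\K^n = L\oplus L'$ with $\dim L' = n-d$ and to compute the defining exact sequence of $M^{s,n}$ block by block after restricting to $Gr(s,L)$. This splitting decomposes
\[\text{End}(\K^n) \,=\, \text{End}(L)\,\oplus\,\text{Hom}(L',L)\,\oplus\,\text{Hom}(L,L')\,\oplus\,\text{End}(L'),\]
and the quotient bundle $Q$ on $Gr(s,\K^n)$ restricts to $Q_L \oplus L'\otimes\OO$ on $Gr(s,L)$, so that
\[TGr(s,\K^n)|_{Gr(s,L)} \,=\, S^*\otimes Q_L \,\oplus\, (S^*)^{n-d} \,=\, TGr(s,L) \oplus (S^*)^{n-d}.\]

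Using the fiber description $A \mapsto (V\hookrightarrow\K^n\xrightarrow{A}\K^n\twoheadrightarrow\K^n/V)$, I would evaluate the map $\text{End}(\K^n)\otimes\OO \to TGr(s,\K^n)|_{Gr(s,L)}$ separately on each of the four summands. The summand $\text{End}(L)\otimes\OO$ lands in $TGr(s,L)$, and since $\text{End}(L) = \text{ad}\,L \oplus \K\cdot I_L$, the defining sequence of $M^{s,d}$ identifies the kernel here as $M^{s,d}\oplus \OO\cdot I_L$. Both $\text{Hom}(L',L)\otimes\OO$ and $\text{End}(L')\otimes\OO$ kill any $V\subset L$, so they contribute $\OO^{d(n-d)}\oplus \OO^{(n-d)^2}$ to the kernel. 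For $\text{Hom}(L,L')\otimes\OO$, the evaluation is the natural restriction $\text{Hom}(L,L')\otimes\OO \twoheadrightarrow S^*\otimes L'\otimes\OO$, whose fiber kernel $\text{Hom}(L/V,L')$ globalizes to $(L/S)^*\otimes L'\otimes\OO = (Q_L^*)^{n-d}$.

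Because the four blocks land in complementary sub-bundles of $TGr(s,\K^n)|_{Gr(s,L)}$, the total kernel is the direct sum
\[K_{\text{total}} \,=\, M^{s,d}\,\oplus\,\OO^{1+n(n-d)}\,\oplus\,(Q_L^*)^{n-d}.\]
Writing $\text{End}(\K^n) = \text{ad}\,\K^n\oplus\K\cdot I_{\K^n}$ and noting that $I_{\K^n}$ evaluates to zero, one obtains $K_{\text{total}} = M^{s,n}|_{Gr(s,L)} \oplus \OO\cdot I_{\K^n}$. Since the line $\OO\cdot I_{\K^n}$ sits inside the trivial summand $\OO^{1+n(n-d)}$ of $K_{\text{total}}$ (as $I_{\K^n}=I_L+I_{L'}$), picking any trivial complement of it produces $\OO^{n(n-d)}$, and comparing the two decompositions of $K_{\text{total}}$ gives the asserted isomorphism.

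\textbf{Main obstacle.} The delicate point is ensuring that the linear-algebra block decomposition of the matrix algebra produces an honest direct-sum decomposition of vector bundles rather than only a $K$-theoretic identity. This is handled by checking that the four blocks have images in complementary sub-bundles of $TGr(s,\K^n)|_{Gr(s,L)}$, and that the final cancellation of $\OO\cdot I_{\K^n}$ takes place entirely inside the trivial summand of $K_{\text{total}}$, where the complement of a trivial line sub-bundle of a trivial bundle is again trivial.
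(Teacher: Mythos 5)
Your proof is correct and takes a genuinely different route from the paper's. The paper writes down a $3\times 3$ commutative diagram whose rows are the defining exact sequences of $M^{s,d}$, of $M^{s,n}_{|Gr(s,L)}$, and of the quotient; it then asserts that the second and third columns split and hence so does the first. That argument is compact, but it quietly relies on the splittings of the middle and right columns being compatible with the horizontal maps, which in general is not automatic and is left implicit.

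Your approach instead chooses an explicit complement $L'$ and decomposes $\text{End}(\K^n)$ into the four obvious blocks, then computes the kernel of the evaluation map block by block. This has two advantages: it makes the splitting of $TGr(s,\K^n)_{|Gr(s,L)}$ completely explicit, and it identifies the trivial summand $\OO^{n(n-d)}$ concretely as coming from $\text{Hom}(L',L)\oplus\text{End}(L')$ (plus the extra $\OO\cdot I_L$ that will get absorbed). The price you pay is the final bookkeeping step with $\OO\cdot I_{\K^n}$: since you evaluate on $\text{End}(\K^n)$ rather than on $\textrm{ad}\,\K^n$ directly, you pick up one extra trivial line that must be cancelled. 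Your observation that both $M^{s,n}_{|Gr(s,L)}$ and $M^{s,d}\oplus\OO^{n(n-d)}\oplus[(Q_L)^*]^{n-d}$ are complements of the same trivial line subbundle $\OO\cdot I_{\K^n}$ inside $K_{\text{total}}$, hence both isomorphic to the common quotient, cleanly handles this. The paper avoids that bookkeeping by staying in $\textrm{ad}\,\K^n$ throughout; you avoid the paper's compatibility-of-splittings issue by being explicit. Either way the content is the same; yours is the more elementary and self-contained of the two.
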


\begin{proof}
This is derived from the following commutative diagram on $Gr(s,L)$:
{\small
$$\begin{array}{ccccccccc}
&&0&&0&&0\\
&&\dow{}&&\dow{}&&\dow{}\\
0&\!\!\!\!\rig{}\!\!\!&M^{s,d}&\!\!\!\rig{}\!\!\!&H^0(TGr(s,L))\otimes\OO&\!\!\!\rig{}\!\!\!&TGr(s,L)&\!\!\!\rig{}\!\!\!\!&0\\
&&\dow{}&&\dow{}&&\dow{}\\
0&\!\!\!\!\rig{}\!\!\!&M^{s,n}_{|Gr(s,L)}&\!\!\!\rig{}\!\!\!&\!\! H^0(TGr(s,\K^n))\otimes\OO\! \!&\!\!\!\rig{}\!\!\!&TGr(s,\K^n)_{|Gr(s,L)}\!&\!\!\!\rig{}\!\!\!\!&0\\
&&\dow{}&&\dow{}&&\dow{}\\
0&\!\!\!\!\rig{}\!\!\!&\!\OO^{n(n-d)}\oplus \left[(Q_L)^*\right]^{n-d}\! \!&\!\!\!\rig{}\!\!\!&\OO^{n^2-d^2}&\!\!\!\rig{}\!\!\!&(S^*)^{n-d}&\!\!\!\rig{}\!\!\!\!&0\\
&&\dow{}&&\dow{}&&\dow{}\\
&&0&&0&&0\\
\end{array}$$
}
The second and third columns split, and hence also the first column splits.
\end{proof}

The appearance of the trivial summand $\OO^{n(n-d)}$ corresponds to the reduction
(in the Introduction) from the Kalman matrix (\ref{eq:kalman}) to the small Kalman matrix~(\ref{eq:kalman2}).
In the case $(s,d)=(1,2)$ of Section 3,
the relevant Grassmannians are
 $Gr(1,L)=\PP^1\,\subset\, Gr(1,\K^n) = \PP^{n-1}$, and
 Proposition \ref{prop:letssplit} says
 $\,M^{n}_{\PP^1}=\OO(-1)^{n}\oplus\OO^{n(n-2)}$.
 
We recall (e.g.~from \cite{Fu}) that if a vector bundle $E$ of rank $r$ on a variety $X$ has a section
vanishing on $Z$, and the codimension of $Z$ is equal to $r$,
then the class of $[Z]$ in the degree $r$ component of the Chow ring $A^r(X)$ 
is computed by $[Z]=c_r(E)$.
We shall apply this to  the following vector bundle on the product variety $X$
in (\ref{twoprojections}):
$$ E \,\,:= \,\, p^*\OO(1)\,\otimes \, q^*TGr(s,\K^n). $$
By K\"unneth' formula,
$$ H^0(X,E)\,\,= \,\,H^0(\PP{{}}(\textrm{ad\ }\K^n),\OO(1))\,\otimes \, H^0(TGr(s,\K^n))
\,\,\simeq\,\,
{\textrm{End(ad\ }\K^n\textrm{)}} . $$
Hence the identify map on $\textrm{ad\ }\K^n$
determines a canonical section $c\in H^0(E)$.

\begin{proposition}\label{zerolocus} The zero locus $Z(c)$ of $c$ equals the incidence variety
$\mathcal{J}$ in (\ref{eq:incidence}).
% $$Z(c)=\mathcal{KS}_{d,s,n}=\{(t,V)\in \PP{{}}(\textrm{ad\ }\K^n)\times Gr(s,\K^n)| t(V)\subseteq V\}$$
% (see the proof of Theorem \ref{desing2}).
\end{proposition}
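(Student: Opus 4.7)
The plan is to reduce the claim to a basis-free tautological computation. By K\"unneth's formula combined with Borel--Weil (applied exactly as in the paragraph preceding Lemma~\ref{eigenvanish}), there is a canonical identification
\[
H^0(X,E) \;\simeq\; H^0(\PP(\textrm{ad\ }\K^n),\OO(1)) \,\otimes\, H^0(Gr(s,\K^n), TGr(s,\K^n)) \;\simeq\; (\textrm{ad\ }\K^n)^* \otimes \textrm{ad\ }\K^n,
\]
under which, by construction, $c$ corresponds to the identity endomorphism of $\textrm{ad\ }\K^n$. The strategy is to evaluate this $c$ at an arbitrary point $([A],V)\in X$ and match the result to the defining condition of $\mathcal{J}$.

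The key calculation I would carry out is to show that, in the fiber $\OO(1)_{[A]} \otimes T_V Gr(s,\K^n)$, the section $c$ evaluates to
\[
c([A],V) \;=\; \ell_A \otimes s_A(V),
\]
where $\ell_A \in \OO(1)_{[A]}$ is the nonzero linear functional on $\K A$ sending $A$ to $1$, and $s_A$ is the Borel--Weil section of $TGr(s,\K^n)$ associated to $A$ as in the construction preceding Lemma~\ref{eigenvanish}. Indeed, expanding $\mathrm{id}=\sum_i e_i^*\otimes e_i$ in a basis $\{e_i\}$ of $\textrm{ad\ }\K^n$, one has $e_i^*([A]) = e_i^*(A)\,\ell_A$ in $\OO(1)_{[A]}$, and the linearity $A \mapsto s_A$ together with $A=\sum_i e_i^*(A)\,e_i$ collapses the sum $\sum_i e_i^*(A)\,s_{e_i}(V)$ to $s_A(V)$. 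The main obstacle lies precisely in this step: carrying out the bookkeeping for the $\OO(1)$-twist in a basis-free way, so that $c$ is seen to pair with the \emph{canonical} scalar $\ell_A$ rather than with some spuriously twisted version.

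Once this identity is established, the conclusion is immediate. Since $\ell_A\neq 0$, the vanishing of $c$ at $([A],V)$ is equivalent to $s_A(V)=0$, and Lemma~\ref{eigenvanish} identifies this with $A(V)\subseteq V$, which is the defining condition of $\mathcal{J}$ in~(\ref{eq:incidence}). This gives $Z(c)=\mathcal{J}$ set-theoretically; the scheme-theoretic equality then follows from the fact, already built into the proof of Theorem~\ref{desing2} via the linear fibers of $q$, that $\mathcal{J}$ is smooth of codimension $s(n-s)=\mathrm{rk}\,E$ in $X$, so $c$ necessarily cuts $\mathcal{J}$ out transversally.
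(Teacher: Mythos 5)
Your proof is correct and takes essentially the same approach as the paper: both evaluate the tautological section $c$ on the fiber of $E$ over a point $([A],V)$, identify its value as (a twist of) $s_A(V)$, and conclude via Lemma~\ref{eigenvanish}. Your version is more careful about the $\OO(1)$ bookkeeping (writing $\ell_A$ where the paper's proof loosely writes $A$) and appends a scheme-theoretic/transversality remark that the paper omits, but these are refinements of the same argument rather than a different route.
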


\begin{proof} We shall compute $c$ on the fiber of $E$ over $(A,V)$.
The fiber is $\langle A\rangle\otimes Hom(V,\K^n/V)$
and the value of $c$ is $A\otimes \left(A_{|V}\colon V\to \K^n/V\right)$.
So $c$ vanishes exactly at the pairs $(A,V)$ such that $V$ is $A$-invariant. This proves the assertion.
\end{proof}

Since $Z(c)$ has codimension $\textrm{rk\ } E = s(n-s)$ in $X$ (e.g.~by Theorem \ref{codimkaldsn}),
 the class of $Z(c)$ equals the top Chern class of $E$.
In symbols, $\,\left[Z(c)\right]\,=\,c_{s(n-s)}(E)$.

\begin{theorem}\label{degkdns}
The degree of  ${\mathcal{K}}_{s,d,n}$
equals $\,c_{s(d-s)}(TGr(s,\K^n))\cdot \left[c_s(S^*)\right]^{n-d}$.
\end{theorem}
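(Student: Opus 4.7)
The plan is to read off the degree of $\tilde{\mathcal{K}}_{s,d,n}$ as an intersection number on $X = \PP(\mathrm{ad}\,\K^n) \times Gr(s,\K^n)$, using the birational resolution $p\colon \mathcal{KS}_{s,d,n} \to \tilde{\mathcal{K}}_{s,d,n}$ furnished by Theorem~\ref{desing2}. Writing $h := p^*\OO_{\PP^{n^2-2}}(1)$, birationality of $p$ together with the projection formula gives
\[ \deg\tilde{\mathcal{K}}_{s,d,n} \;=\; \int_X h^{\,n^2-2-s(n-d)} \cdot [\mathcal{KS}_{s,d,n}] . \]
After replacing $L$ by a generic subspace of the same dimension (which does not change the degree), $\mathcal{KS}_{s,d,n} = \mathcal{J} \cap q^{-1}(Gr(s,L))$ is a transverse intersection, so $[\mathcal{KS}_{s,d,n}] = [\mathcal{J}] \cdot q^*[Gr(s,L)]$. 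Proposition~\ref{zerolocus} already identifies $[\mathcal{J}] = c_{s(n-s)}(E)$ with $E = p^*\OO(1) \otimes q^*TGr(s,\K^n)$.

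Next, I would compute the class of $Gr(s,L)$ in $A^*(Gr(s,\K^n))$. The composition $S \hookrightarrow \OO^n \twoheadrightarrow \K^n/L \otimes \OO = \OO^{n-d}$ defines a section of $S^* \otimes \OO^{n-d}$ that vanishes exactly where $S \subseteq L$, that is, on $Gr(s,L)$. Since the zero locus has the expected codimension $s(n-d)$, which equals the rank of $S^* \otimes \OO^{n-d}$, one gets $[Gr(s,L)] = c_{s(n-d)}(S^* \otimes \OO^{n-d}) = c_s(S^*)^{n-d}$, the last equality because this bundle is a direct sum of $n-d$ copies of $S^*$.

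For the last step, I would expand the top Chern class of the tensor product $E$ via the standard Chern-root identity $c_r(M \otimes F) = \sum_{k=0}^{r} c_1(M)^{r-k}\, c_k(F)$ for a line bundle $M$ and a rank-$r$ bundle $F$, yielding
\[ c_{s(n-s)}(E) \;=\; \sum_{k=0}^{s(n-s)} h^{\,s(n-s)-k} \cdot q^*c_k(TGr(s,\K^n)) . \]
Substituting into the intersection formula and using K\"unneth, the $k$-th summand carries $h$ to the power $n^2-2 + s(d-s) - k$; this integrates to zero on $\PP^{n^2-2}$ unless $k = s(d-s)$, in which case $\int_{\PP^{n^2-2}} h^{\,n^2-2} = 1$, and what remains is exactly
\[ \deg\tilde{\mathcal{K}}_{s,d,n} \;=\; \int_{Gr(s,\K^n)} c_{s(d-s)}(TGr(s,\K^n))\cdot c_s(S^*)^{n-d} , \]
which is the asserted identity. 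The only delicate point is the transversality reduction used to split $[\mathcal{KS}_{s,d,n}]$ as the product $[\mathcal{J}]\cdot q^*[Gr(s,L)]$; however, because both sides of the claimed formula are manifestly independent of the choice of $L$, a Bertini-type argument applied to a generic $L$ should handle this routinely, and I do not expect a real obstacle.
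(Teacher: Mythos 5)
Your argument is correct and follows the paper's own route essentially step for step: express the degree as an intersection number on $X$ via the birational resolution, write $[\mathcal{KS}_{s,d,n}]=[\mathcal{J}]\cdot q^*[Gr(s,L)]$, identify $[\mathcal{J}]$ with $c_{s(n-s)}(E)$ using Proposition~\ref{zerolocus}, expand the top Chern class of the twist $p^*\OO(1)\otimes q^*TGr(s,\K^n)$, and observe that only the term with $q^*c_{s(d-s)}(TGr(s,\K^n))$ survives integration over $\PP^{n^2-2}$. You spell out a couple of details the paper leaves tacit (the section of $S^*\otimes\OO^{n-d}$ cutting out $Gr(s,L)$, and the transversality of $\mathcal{J}$ with $q^{-1}(Gr(s,L))$), but the method is the same.
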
 

\begin{proof}
The class of $Gr(s,L)\subset Gr(s,\K^n)$ is $\left[c_s(S^*)\right]^{n-d}$.
The projection $p$ in (\ref{twoprojections}) gives a birational map
$Z(c)\cdot q^*\left[c_s(S^*)\right]^{n-d}\to {\mathcal{K}}_{s,d,n}$.
The desired degree equals
\begin{equation}
\label{eq:whatwewant} {\rm deg}({\mathcal{K}}_{s,d,n}) \,\,=\,\,
p^*c_1(\OO(1))^{n^2-2-s(n-d)}\cdot c_{s(n-s)}(E)\cdot q^*\left(c_s(S^*)\right)^{n-d}.
\end{equation}
The Chern class of  $\, E \,= \, p^*\OO(1)\,\otimes \, q^*TGr(s,\K^n)\,$ decomposes as
$$c_{s(n-s)}(E) \,\,=\,
\bigoplus_{i=0}^{s(n-s)}p^*c_1(\OO(1))^{s(n-s)-i}\cdot q^*c_i(TGr(s,\K^n)).$$ 
Hence the quantity on the right of (\ref{eq:whatwewant}) can be written as
$$
\sum_{i=0}^{s(n-s)}p^*c_1(\OO(1))^{n^2-2+s(d-s)-i}\cdot q^* \! 
\left[c_i(TGr(s,\K^n))\cdot \left(c_s(S^*)\right)^{n-d}\right]$$ 
All summands are zero except for $i=s(d-s)$, and the result follows.
\end{proof}

\begin{proof}[Proof of Theorem~\ref{thm:degreesing}]
 The cohomology ring of the Grassmannian is identified with a
ring of symmetric polynomials (see e.g.~\cite{Fu2}), and under this identification we have
\begin{equation}
\label{eq:elemsym}  c_i(S^*) \ \longleftrightarrow \
e_i \,\,\,{\small\textrm{ $=$ $\,i$-th elementary symmetric function in\ } x_1,\ldots, x_s} 
\end{equation}
From the sequence (\ref{Euler}), tensored by $S^*$, we get the Chern polynomial
%\begin{equation}\label{chernpoly}
$$ p_{TGr(s,\K^n)}(t) \,\,=\, \, \frac{p_{S^*}(t)^{n}}{p_{S\otimes S^*}(t)}
\,\,=\,\, \frac{(\sum_{i=0}^sc_i(S^*)t^i)^n} {\sum_{i=0}^sc_i(S\otimes S^*)t^i } \,\,=\,\,
\frac{\prod_{i=1}^s(1+x_it)^n}{\prod_{i,j=1}^s \bigl(1+(x_i - x_j)t \bigr)} $$
Theorem \ref{degkdns} and (\ref{eq:elemsym}) imply
the result. Note the analogy with (3) in~\cite[\S 16]{BH}.
 \end{proof}

We end by describing a few special cases of Theorem~\ref{thm:degreesing}.
For $s=1$ we recover Proposition \ref{thm:degree} for the original
Kalman variety:
$\deg {\mathcal{K}}_{1,d,n}=\deg c_{d-1}T\PP^{n-1}={{n}\choose{d-1}}$.
For $s=d-1$, the Grassmannian $Gr(s,\K^d)$ is a projective space and 
the degree of ${\mathcal{K}}_{d-1,d,n}$
is the coefficient of $t^{d-1}$ in the rational function $(1+t)^d/(1-t)^{n-d}$.
For $s=2$, the Chern polynomial in 
the above proof is the appropriate truncation~of
$$p_{TGr(2,\K^n)}(t)
\,\,= \,\,\frac{(1+c_1(S^*)t+c_2(S^*)t^2)^n}{1-\left(c_1(S^*)^2-4c_2(S^*)\right)t^2}
\,\,=\,\, \frac{( 1+x_1 t)^n (1+x_2 t)^n }{1 - (x_1-x_2)^2 t^2}.
$$
Christoph Koutschan from RISC-Linz kindly helped us by means of
his software \cite{Kou} for holonomic summation.
Writing $(a)_n=\prod_{i=0}^{n-1}(a+i)$, he found the formula
\begin{equation}
\label{eq:koutschan}
\deg ({\mathcal{K}}_{2,d,n}) \,\,=\,\,
(-1)^d \frac{2^{2d-3}}{(d-1)!}
 \sum_{k=0}^{d-2} \frac{(1/2-k)_{d-1} (n+1-k)_k
(d+n-2k)_k}{(2k)!}.
\end{equation}
This expression is  the degree of the singular locus of the Kalman variety $\mathcal{K}_{d,n}$.

We note that the series expansion of (\ref{eq:degformula1}) in terms of Schur polynomials
can be computed using Stembridge's {\tt Maple} package {\tt SF} \cite{Stem} or the
more recent package {\tt Schubert2} in {\tt Macaulay2} \cite{M2}.
  For our computations we used the latter, by means of the following  convenient function,
 which implements the formula in Theorem \ref{degkdns}:
 
 \smallskip
 
\begin{verbatim}
loadPackage "Schubert2"
kal = (s,d,n) -> (G = flagBundle({s,n-s}); (S,Q) = G.Bundles;
integral(((chern(s,dual(S)))^(n-d))*(chern(s*(d-s),(dual(S))**Q))))
\end{verbatim} 

\smallskip

This little piece of code highlights the utility of vector bundle techniques
as a practical tool for hands-on computations
concerning problems in linear algebra.

\bigskip

\bigskip


\begin{thebibliography}{99}

\bibitem{BZ} K.~Beauchard and E.~Zuazua:
Large time asymptotics for partially hyperbolic systems,
{\em Arch.~Rational Mech.~Anal.} {\bf 199} (2011) 177--227.

\bibitem{BH} A.~Borel and F.~Hirzebruch:
Characteristic classes and homogeneous spaces. I, {\em American J. of Math.}
{\bf 80} (1958) 458--538

\bibitem{CHPP} A.~Compta, U.~Helmke, M.~Pe{\~n}a, and X.~Puerta:
  Simultaneous versal deformations of endomorphisms and
 invariant subspaces, {\em Linear Algebra Appl.} {\bf 413}, (2006) 303--318

\bibitem{Fu} W.~Fulton: {\em Intersection Theory}, Springer, Berlin, 1984.

\bibitem{Fu2} W.~Fulton: {\em Young Tableaux}, LMS Student Texts 35, Cambridge University Press, 1997.

\bibitem{hautus} M.~Hautus: Controllability and observability conditions of linear
autonomous systems, {\em Indagationes Mathematicae} {\bf 31} (1969) 443--448.

\bibitem{HT} U.~Helmke and J.~Trumpf:
Conditioned invariant subspaces and the geometry of nilpotent
              matrices, in:
{\em New directions and applications in control theory},
{\em Lecture Notes in Control and Inform. Sci.}, {\bf 321},
Springer, Berlin, (2005), 123--163.


\bibitem{M2} D.~Grayson and M.~Stillman: 
Macaulay 2, a software system for research in algebraic geometry, available at 
{\tt www.math.uiuc.edu/Macaulay2/}.

\bibitem{Kal} R.E.~Kalman:
Contributions to the theory of optimal control,
{\em Bol. Soc. Mat. Mexicana} (2) {\bf 5} (1960) 102--119.

\bibitem{Kou} C.~Koutschan:
{\em Advanced Applications of the Holonomic Systems Approach},
PhD Thesis, RISC, Johannes Kepler University,
Linz, Austria, 2009.

\bibitem{MS} E.~Miller and B.~Sturmfels: {\em Combinatorial
Commutative Algebra}, Springer, New York, 2005.

\bibitem{OVV} N.I.~Osetinski{\u\i}, O.O.~Vasil{$'$}ev, and
              F.S.~Vainshte{\u\i}n:
Geometric combinatorics of Kalman algebras,
{\em Differential Equations} {\bf 42} (2006), no. 11, 1604--1611.

\bibitem{Saa1}
T.~Saaty: {\em The Analytic Hierarchy Process: Planning, Priority Setting, Resource Allocation},
McGraw-Hill, New York, 1980. 

\bibitem{Saa2}
T.~Saaty and G.~Hu:
{\em Ranking by eigenvector versus other methods in the analytic hierarchy process},
{\em Appl. Math. Lett.} {\bf 11} (1998), no.~4, 121--125.

\bibitem{Sam} S. Sam: Equations and syzygies of some Kalman
varieties, Proc. Amer. Math. Soc. 140 (2012), 4153-4166.

\bibitem{She} D. Shemesh: Common eigenvectors of two matrices,
{\em Linear Algebra Appl.} {\bf 62} (1984) 11--18.

\bibitem{Stem}
J.~Stembridge: A Maple package for symmetric functions,
{\em J. Symbolic Comput.} {\bf 20} (1995), no.~5-6, 755--768.

\bibitem{Tran} N.~Tran:
Pairwise ranking:~choice of method can produce arbitrarily different rank order,
to appear in Linear Algebra and its Applications, {\tt arXiv:1103.1110}.

\end{thebibliography}
\end{document}